\newcommand{\nwc}{\newcommand}
\nwc{\aaa}{\mathcal{F}}
\nwc{\aap}{\mathcal{F}_{P}}
\nwc{\al}{\alpha}
\nwc{\C}{\mathbb{C}}
\nwc{\cb}{\overline{C}}
\nwc{\ccc}{\mathfrak{c}}
\nwc{\ch}{\widehat{C}}
\nwc{\cin}{\textbf{(v)}}
\nwc{\cl}{C'}
\nwc{\cp}{\mathcal{C}_{P}}
\nwc{\cpll}{\mathfrak{c}_{P'}}
\nwc{\ct}{\widetilde{C}}
\nwc{\dd}{\mathcal{L}}
\nwc{\ddd}{\mathfrak{d}}
\nwc{\ddl}{\mathcal{L}'}
\nwc{\dlp}{\delta_{P}}
\nwc{\doi}{\textbf{(ii)}}
\nwc{\enq}{$$}
\nwc{\He}{H^{\ast}}
\nwc{\fl}{\flushleft}
\nwc{\fff}{\mathcal{F}}
\nwc{\ffp}{\mathcal{F}_{P}}
\nwc{\ffq}{\mathcal{F}_{Q}}
\nwc{\ffl}{\mathcal{F}'}
\nwc{\G}{\mathcal{G}}
\nwc{\Ga}{\Gamma}
\nwc{\gtl}{\widetilde{g}}
\nwc{\hra}{\hookrightarrow}
\nwc{\hua}{h^{1}(C,\aaa )}
\nwc{\kk}{{\rm K}}
\nwc{\llb}{\mathcal{L}}
\nwc{\mb}{\mathbb}
\nwc{\mc}{\mathcal}
\nwc{\mm}{\mathfrak{m}}
\nwc{\mmp}{\mathfrak{m}_{P}}
\nwc{\mpd}{\mathfrak{m}_{P}^{2}}
\nwc{\nn}{\mathbb{N}}
\nwc{\ob}{\overline{\mathcal{O}}}
\nwc{\obr}{\mathcal{O}^*}
\nwc{\obp}{\overline{\mathcal{O}}_P}
\nwc{\och}{\mathcal{O}_{\hat{C}}}
\nwc{\oh}{\hat{\mathcal{O}}}
\nwc{\ohp}{\hat{\mathcal{O}}_{P}}
\nwc{\ol}{\mathcal{O}'}
\nwc{\oma}{\Omega (\mathfrak{a})}
\nwc{\omo}{\Omega (\mathcal{O})}
\nwc{\oo}{\mathcal{O}}
\nwc{\op}{\mathcal{O}_P}
\nwc{\opc}{\mathcal{O}_{P,C}}
\nwc{\oph}{\hat{\mathcal{O}}_{P}}
\nwc{\opl}{\mathcal{O}_{P}'}
\nwc{\oplc}{\mathcal{O}_{P,C}'}
\nwc{\opll}{\mathcal{O}_{P'}}
\nwc{\opt}{\tilde{\mathcal{O}}_{P}}
\nwc{\optt}{{\mathcal{O}}_{\tilde{P}}}
\nwc{\oq}{\mathcal{O}_{Q}}
\nwc{\oqt}{\tilde{\mathcal{O}}_{Q}}
\nwc{\ot}{\widetilde{\mathcal{O}}}
\nwc{\overop}{\bar{\oo}_{P}}
\nwc{\pb}{\overline{P}}
\nwc{\pbb}{P^*}
\nwc{\pbi}{\overline{P_{i}}}
\nwc{\pbr}{\overline{P_{r}}}
\nwc{\pgmd}{\mathbb{P}^{g+2}}
\nwc{\pgmu}{\mathbb{P}^{g+1}}
\nwc{\ph}{\hat{P}}
\nwc{\pp}{\mathbb{P}}
\nwc{\prv}{\noindent\textbook{Proof}:}
\nwc{\pt}{\widetilde{P}}
\nwc{\ptl}{\tilde{P}}
\nwc{\pum}{\mathbb{P}^{1}}
\nwc{\qh}{\hat{Q}}
\nwc{\qtl}{\tilde{Q}}
\nwc{\qua}{\textbf{(iv)}}
\nwc{\rh}{\hat{R}}
\nwc{\sei}{\textbf{(vi)}}
\nwc{\sep}{\beq\ast\ \ast\ \ast\enq}
\nwc{\sig}{\sigma}
\nwc{\Sig}{\Sigma}
\nwc{\ssp}{S_{P}}
\nwc{\sss}{{\rm S}}
\nwc{\tre}{\textbf{(iii)}}
\nwc{\um}{\textbf{(i)}}
\nwc{\vpb}{v_{\overline{P}}}
\nwc{\vtxp}{\widetilde{V}_{x,P}}
\nwc{\vxp}{V_{x,P}}
\let \wt=\widetilde
\nwc{\wh}{\hat{\omega}}
\nwc{\whp}{\hat{\omega}_{P}}
\nwc{\woch}{\omega\cdot\mathcal{O}_{\hat{C}}}
\nwc{\woh}{\omega\cdot\hat{\mathcal{O}}}
\nwc{\ww}{\omega}
\nwc{\wwb}{\omega^*}
\nwc{\wwct}{\omega _{\widetilde{C}}}
\nwc{\wwh}{\widehat{\omega}}
\nwc{\wwhp}{\widehat{\omega}_P}
\nwc{\wwp}{\omega _{P}}
\nwc{\wwt}{\widetilde{\omega}}
\nwc{\wwtp}{\widetilde{\omega}_P}
\nwc{\zz}{\mathbb{Z}}
\newtheorem{coro}{Corollary}[section]
\newtheorem{lemma}[coro]{Lemma}
\newtheorem{prop}[coro]{Proposition}
\newtheorem{prob}[coro]{Problem}
\newtheorem{rem}[coro]{Remark}
\newtheorem{thm}[coro]{Theorem}
\newtheorem{conj}[coro]{Conjecture}
\newtheorem{const}[coro]{Construction}
\newtheorem{claim}[coro]{Claim}
\let \fl=\flushleft
\let \ga=\gamma
\let \sub=\subset
\let \al=\alpha
\let \pr=\prime
\let \ov=\overline
\begin{document}

\title{Weight bounds for $(3,\ga)$-hyperelliptic curves}

\author{Rafael Barbosa da Silva}
\address{Instituto de Matem\'atica, Universidade Federal Fluminense, Rua Prof Waldemar de Freitas, S/N, Campus do Gragoat\'a, CEP 24.210-201, Niter\'oi, RJ, Brazil}
\email{rafaelufrpe@gmail.com}

\author{Ethan Cotterill}
\address{Instituto de Matem\'atica, Universidade Federal Fluminense, Rua Prof Waldemar de Freitas, S/N, Campus do Gragoat\'a, CEP 24.210-201, Niter\'oi, RJ, Brazil}
\email{cotterill.ethan@gmail.com}

\begin{abstract}
{\it $(N,\ga)$-hyperelliptic} semigroups were introduced by Fernando Torres to encapsulate the most salient properties of Weierstrass semigroups associated to totally-ramified points of $N$-fold covers of curves of genus $\ga$. Torres characterized $(2,\ga)$-hyperelliptic semigroups of maximal weight whenever their genus is large relative to $\ga$. Here we do the same for $(3,\ga)$-hyperelliptic semigroups, and we formulate a conjecture about the general case whenever $N \geq 3$ is prime. 
\end{abstract}
\maketitle

\section{Canonical inflection, and Weierstrass weights of $(3,\ga)$-hyperelliptic curves}

The {\it inflection} of a linear series $(L,V)$ on a smooth complex algebraic curve $C$ is a fundamental numerical invariant of $(L,V)$. To define it, assume that $L$ is a line bundle of degree $d$, while $V \sub H^0(C,L)$ is an $(r+1)$-dimensional subspace of holomorphic sections. We then let
\begin{equation}\label{inflection_divisor}
I_{(L,V)}:= \sum_{p \in C} (a_i(p)-i)p
\end{equation}
in which $a_0(p) < a_1(p) < \dots < a_r(p)$ is the sequence of {\it vanishing orders} of sections of $V$ in $p$. Thus $I_{(L,V)}$ is a divisor on $C$, and a fundamental result of Pl\"ucker establishes that its total degree is uniquely determined by the values of $d$, $r$, and the genus $g$ of $C$. On the other hand, the {\it local} inflection indices $(a_i(p)-i)$ in \eqref{inflection_divisor} are less predictable, and it is interesting to see how they vary as a function of the global geometry of $(L,V)$.

\medskip
The case of the {\it canonical} series $|K_C|= (K_C, H^0(C,K_C))$ is distinguished. Indeed, inflection points of the canonical series are {\it Weierstrass points} and have been widely studied. Serre duality relates the set $\rm{A}$ of vanishing orders in $p$ of sections of the canonical series with the set $\rm{S}$ of pole orders in $p$ of meromorphic functions on $C$: we have $\rm{A}= (\mb{N} \setminus \rm{S})-1$. Here $\rm{S}={\rm S}(C,P)$ is the {\it Weierstrass semigroup} of $C$ in $p$; its {\it weight} is defined to be $\sum_{i=1}^g (\ell_i-i)$, where $\ell_1 < \ell_1 < \dots < \ell_g$, $\ell_i=a_i(p)+1$ is the sequence of elements belonging to $\mb{N} \setminus \rm{S}$. A natural problem in this context is to explain, on geometric grounds, how to maximize the weight of the Weierstrass semigroup ${\rm S}(C,p)$, or equivalently the inflection of the canonical series $|K_C|$ in $p$.

\medskip
We are specifically interested in this problem in cases in which the curve $C$ may be realized as an $N$-fold branched cover $\pi: C \rightarrow B$ of a curve $B$ of genus $\ga$. We further require that $\pi$ have at least one totally ramified point, $p$; then via pullback of meromorphic functions, we see that $N \cdot {\rm S}(B,q) \sub {\rm S}(C,p)$. C\'icero Carvalho and Fernando Torres studied such Weierstrass semigroups ${\rm S}(C,p)$ from a combinatorial point of view, 
calling them {\it $(N,\ga)$-hyperelliptic} \cite{CT} by analogy with hyperelliptic semigroups (which correspond to the case $N=2$, $\ga=0$). Their salient properties are that 
\begin{enumerate}
\item[1.] The first $\ga$ positive elements $n_1, \dots, n_{\ga}$ of $\rm S$ are multiples of $N$, and $n_{\ga}=2\ga N$; and
\item[2.] $(2\ga+1)N$ belongs to $\rm S$.
\end{enumerate}

\subsection{$(3,\ga)$-hyperelliptic semigroups}
Now let ${\rm S} \sub \mb{N}$ denote a numerical semigroup of genus $g$. Recall that this means that the complement $G_{\rm S}= \mb{N} \setminus {\rm S}$ is of cardinality $g$; say
\[
G_{\rm S}= \{\ell_1, \dots, \ell_g\}
\]
where $\ell_i < \ell_j$ whenever $1 \leq i< j \leq g$.
We define the {\it weight} of ${\rm S}$ to be the quantity
\[
W_{\rm S}:= \sum_{i=1}^{g} (\ell_i-i).
\]

\begin{rem}\label{rem0}
The conductor of a numerical semigroup of genus $g$ is always at most $2g$; consequently, a numerical semigroup ${\rm S}$ of genus $g$ is uniquely determined by its truncation ${\rm S}^*:= {\rm S} \cap [2g]$. It follows that two numerical semigroups ${\rm S}$ and ${\rm T}$ of the same genus $g$ satisfy 
\[
W_{\rm S} \geq W_{\rm T} \iff \sum_{s \in {\rm S}^*} s \leq \sum_{t \in {\rm T}^*} t.
\]
\end{rem}
We will denote $\sum_{s \in {\rm S}^*} s$ by $\wt{I}_S$; up to recalibration by a constant, this is the {\it inflection} of ${\rm S}$ (not to be confused with the inflection of the canonical series of the preceding subsection!), and we will refer to it as such.

\medskip
Now let $N,\ga \geq 0$ be integers. Following \cite{CT}, we say that ${\rm S}$ is {\it $(N,\ga)$-hyperelliptic} if it satisfies the following conditions:
\begin{enumerate}
\item[1.] The first $\ga$ positive elements $n_1, \dots, n_{\ga}$ of $\rm S$ are multiples of $N$, and $n_{\ga}=2\ga N$; and
\item[2.] $(2\ga+1)N$ belongs to $\rm S$.
\end{enumerate}

It will be useful for our purposes to filter $(N,\ga)$-hyperelliptic semigroups according to their residues modulo $N$. Given an integer $0 \leq i \leq N-1$, we let ${\rm S}_i$ denote the subset of ${\rm S}^{\ast}$ of elements with residue $i$ modulo $N$. Given an integer $M \geq 2$, we use $(u)_{M}$ as a shorthand for $u \text{ mod }M$.

\medskip
Torres showed in \cite{To2} that $(2,\ga)$-hyperelliptic semigroups of genus $g \geq 2\ga$ satisfy the characteristic weight inequality
\begin{equation}\label{two_bound}
\binom{g-2\ga}{2}\leq W_{\rm S}\leq \binom{g-2\ga}{2} + 2\ga^2.
\end{equation}

In an attempt to generalize \eqref{two_bound}, Carvalho and Torres 
determined a lower bound on the weight of $(N,\ga)$-hyperelliptic semigroups of genus $g$.

\begin{thm}[\cite{CT}, Thm 4.1]\label{(N,gamma)_lb}
Let $\ga$, $g$, and $N$ be non-negative integers. Every $(N,\ga)$-hyperelliptic semigroup ${\rm S}$ of genus $g$ satisfies
\[
W_{\rm S} \geq \frac{(g-N\ga-N+1+q)(g-N\ga-q)}{2(N-1)}.
\]
where $q= (g-\ga)_{(N-1)}$.
\end{thm}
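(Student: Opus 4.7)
The plan is to bound the total gap sum $\sum_{\ell\in G_{\rm S}}\ell$ from below by partitioning the gaps according to residues modulo $N$, using a trivial minimum-sum estimate in each residue class, and then minimizing the resulting expression over admissible residue-class sizes. Since $W_{\rm S}=\sum_{\ell\in G_{\rm S}}\ell-\binom{g+1}{2}$, this translates directly into the desired lower bound on the weight.

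First I extract two structural consequences of $(N,\ga)$-hyperellipticity. The set $\{s/N : s\in {\rm S},\ N\mid s\}$ is a numerical semigroup containing $0,\,n_1/N,\ldots,n_\ga/N=2\ga$ as well as $2\ga+1$, hence has conductor at most $2\ga$; a direct count then shows its genus is exactly $\ga$, so $G_{\rm S}$ contains precisely $\ga$ multiples of $N$. Moreover, because the first $\ga$ positive elements of ${\rm S}$ are all multiples of $N$ bounded by $2\ga N$, every non-multiple of $N$ in $[1,2\ga N]$ must lie in $G_{\rm S}$. Writing $G_j:=\{\ell\in G_{\rm S}:\ell\equiv j\pmod N\}$, we obtain $|G_0|=\ga$, $|G_j|\geq 2\ga$ for $j=1,\ldots,N-1$, and $\sum_{j=1}^{N-1}|G_j|=g-\ga$.

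For $j\neq 0$, $G_j$ consists of $|G_j|$ distinct positive integers congruent to $j$ modulo $N$, so $\sum_{\ell\in G_j}\ell\geq j|G_j|+N\binom{|G_j|}{2}$; similarly $\sum_{\ell\in G_0}\ell\geq N\binom{\ga+1}{2}$. Summing yields
\[
W_{\rm S}\;\geq\; N\binom{\ga+1}{2}+\sum_{j=1}^{N-1}\!\left[\,j|G_j|+N\binom{|G_j|}{2}\right]-\binom{g+1}{2},
\]
which is now a function of the tuple $(|G_j|)_j$ alone. Writing $|G_j|=2\ga+M_j$ with $M_j\geq 0$ and $\sum_j M_j = g-\ga(2N-1)$, a unit transfer from $M_{j'}$ to $M_j$ changes the right-hand side by $(j-j')+N(M_j-M_{j'}+1)$; this exchange computation forces the minimizing tuple to be non-increasing with consecutive differences in $\{0,1\}$. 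Writing $g-\ga(2N-1)=(N-1)k+q$ with $0\leq q<N-1$, the minimum is therefore attained at $M_j=k+1$ for $j\leq q$ and $M_j=k$ for $j>q$. Since $\ga(2N-1)\equiv\ga\pmod{N-1}$, this $q$ coincides with the $(g-\ga)_{(N-1)}$ of the theorem, and $P:=k+\ga$ satisfies $(N-1)P+q=g-N\ga$.

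The main obstacle is the final algebraic collapse. Substituting the optimal $M_j$ back into the displayed inequality, expanding each $\binom{2\ga+M_j}{2}$, and regrouping against $\binom{g+1}{2}=\binom{N\ga+(N-1)P+q+1}{2}$ reduces the right-hand side to $(N-1)\binom{P}{2}+Pq$. A short rearrangement then gives $\frac{(g-N\ga-N+1+q)(g-N\ga-q)}{2(N-1)}$, which is exactly the bound of the theorem. This last step is a painful but routine expansion of products of linear forms in $\ga$, $P$ and $q$.
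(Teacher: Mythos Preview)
Your argument is essentially correct and considerably more explicit than what the paper itself provides: the paper does not prove this theorem but simply cites \cite{CT} and remarks that it follows easily from the Dyck-path interpretation of weight from \cite{BdM}. Your residue-class decomposition together with the discrete convex minimization is a self-contained elementary route that bypasses any appeal to Dyck paths; the exchange computation and the identification of $q$ with $(g-\ga)_{N-1}$ are correct, and the final algebra does indeed collapse to $(N-1)\binom{P}{2}+Pq$ with $P=k+\ga$, which rewrites as the stated bound.

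One step needs an extra line of justification. The assertion that the quotient semigroup $\{s/N : s\in{\rm S},\ N\mid s\}$ has conductor at most $2\ga$ (equivalently $|G_0|=\ga$ exactly) does not follow merely from the fact that it contains $2\ga$ and $2\ga+1$. Here is the missing argument: if $m\geq 2\ga+2$ were the smallest gap of the quotient above $2\ga$, then the quotient would have exactly $m-\ga-1$ nonzero nongaps below $m$ (the $\ga$ elements $n_i/N$ together with $2\ga+1,\ldots,m-1$), and for each such nongap $s$ the difference $m-s$ would be a gap lying in $[1,m-1]$; but by minimality of $m$ there are only $\ga$ gaps in $[1,m-1]$, forcing $m-\ga-1\leq\ga$, a contradiction. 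With this filled in, the rest of your proof goes through as written.
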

 
The lower bound in Theorem~\ref{(N,gamma)_lb}, which is sharp, is easy to prove using the fact that the weight is given by the number of squares in the complement inside a $2g \times 2g$ grid of squares of the Dyck path associated to ${\rm S}^*$ in \cite{BdM}. An upper bound, however, is significantly more delicate. In this note, we will find a upper bound on the weight of $(N,\ga)$-hyperelliptic semigroups of genus $g$ when $N=3$.

\medskip
Accordingly, let $\rm S$ be a $(3,\ga)$-hyperelliptic semigroup of genus $g$. In light of Remark~\ref{rem0}, in order to maximize the weight of $\rm S$, it suffices to minimize the inflection $\wt{I}_S$. 
Carvalho and Torres observed
that when the genus $g$ is {\it sufficiently small} relative to $\ga$, the following scheme produces the truncation ${\rm S}^*$ of an $(3,\ga)$-hyperelliptic semigroup ${\rm S}$ of maximal weight relative to $g$. 

\begin{const}\label{gsmall}Let g and $\ga$ be nonnegative integers. Let ${\rm S}^{ct}$ denote the unique completion of the $g$-element subset ${\rm S}^{ct,*}= {\rm S}^{ct}_0 \sqcup {\rm S}^{ct}_1 \sqcup {\rm S}^{ct}_2 \sub [2g]$ to a numerical semigroup of genus $g$, where
\[
\begin{split}
{\rm S}^{ct}_0&:=\{6i: i=1,\dots,\ga\} \sqcup \{6\ga+3j: 1 \leq j \leq \bigg \lfloor \frac{2g}{3} \bigg \rfloor-2\ga\}, \\
{\rm S}^{ct}_r&:= (u_1+S_0)\cap [2g], \text{ and}\\
\rm S^{ct}_{3-r}&:=(\rm S_r+\rm S_r)\cap [2g].
\end{split}
\]
Here we assume $g \not\equiv 2 \text{ (mod } 3)$; $u_1=g-3\ga +1
$ is the smallest possible non-3-divisible element of $[2g]$ for which $\#(\rm S^{ct}_0 \sqcup \rm S^{ct}_1 \sqcup \rm S^{ct}_2)=g$; and $r=(g+1)_{3}
$. 
\end{const}

A straightforward calculation shows the Carvalho--Torres semigroup ${\rm S}^{ct}$ has weight
\[
\frac{A(5A+1)}{6}+ 3A\ga+ 6\ga^2+ 3\ga- \binom{g+1}{2}.
\]
where $A=g-3\ga$. When $g$ is large, however, Construction~\ref{gsmall} fails to produce a $(3,\ga)$-hyperelliptic semigroup of maximal weight.

\begin{thm}\label{maximal_weight_conjecture} 
Assume that $g \gg \ga$. The $(3,\ga)$-hyperelliptic semigroup ${\rm S}^{bc}$ of maximal weight obtained by completing the $g$-element subset ${\rm S}^{bc,*}= {\rm S}^{bc}_0 \sqcup {\rm S}^{bc}_1 \sqcup {\rm S}^{bc}_2$ to a numerical semigroup of genus $g$ is such that $6$ belongs to ${\rm S}^{bc}_0$, while ${\rm S}^{bc}_1$ and ${\rm S}^{bc}_2$ are specified by the following scheme. 

\begin{enumerate}
    \item If  $g-\ga \not \equiv 2$ (\text{mod }3), then 
    \[
    \begin{split}
    {\rm S}^{bc}_{(u_1)_{3}}&=\{u_1+               3k: k=0, \dots, \tau\} \text{ and }\\
    {\rm S}^{bc}_{3-(u_1)_{3}}&=({\rm S}^{bc}_{(u_1)_{3}}+{\rm S}^{bc}_{(u_1)_{3}}) \cap [2g]=\{2u_1+3k: k=0, \dots, \nu\}.
    \end{split}
    \]
    \item If  $g-\ga \equiv 2$ (\text{mod }3), then
    \[
    \begin{split}
    {\rm S}^{bc}_{(u_1)_{3}}&=\{u_1\} \sqcup \{u_1+6+3k: k=0, \dots, \tau\} \text{ and} \\
    {\rm S}^{bc}_{3-(u_1)_{3}}&=({\rm S}^{bc}_{(u_1)_{3}}+{\rm S}^{bc}_{(u_1)_{3}}) \cap [2g]=\{2u_1\} \sqcup \{2u_1+6+3k: k=0, \dots, \nu\}. 
    \end{split}
    \]
\end{enumerate}
Here $u_1=g-\ga+1-2\lfloor\frac{(g-\ga)_{3}}{2}\rfloor$; 
 $\tau:= \lceil\frac{g}{3}\rceil+\lfloor\frac{\ga}{3}\rfloor-1$ whenever $\lfloor\frac{(\ga)_{3}}{2}\rfloor\neq (u_1)_{3}$ and $\tau:= \lceil\frac{g}{3}\rceil+\lfloor\frac{\ga}{3}\rfloor$ otherwise; while 
 $\nu:= \ga-\lfloor\frac{\ga}{3}\rfloor-1$ whenever $\lfloor\frac{(\ga)_{3}}{2}\rfloor\neq (u_1)_{3}$ and 
 $\nu:= \ga-\lfloor\frac{\ga}{3}\rfloor-2$ otherwise. 
 In particular, we have
\begin{itemize}
    \item $\#{\rm S}^{bc}_0=\lfloor\frac{2g}{3}\rfloor-\ga$;
    \item $\#{\rm S}^{bc}_{(u_1)_{3}}=\lceil\frac{g}{3}\rceil+\lfloor\frac{\ga}{3}\rfloor$ when $\lfloor\frac{(\ga)_{3}}{2}\rfloor\neq (u_1)_{3}$, and $\#{\rm S}^{bc}_{(u_1)_{3}}=\lceil\frac{g}{3}\rceil+\lfloor\frac{\ga}{3}\rfloor+1$ otherwise; and
    \item $\#{\rm S}^{bc}_{3-(u_1)_{3}}=\ga-\lfloor\frac{\ga}{3}\rfloor$ when $\lfloor\frac{(\ga)_{3}}{2}\rfloor\neq (u_1)_{3}$, and $\#{\rm S}^{bc}_{3-(u_1)_{3}}=\ga-\lfloor\frac{\ga}{3}\rfloor-1$ otherwise. 
\end{itemize}
\end{thm}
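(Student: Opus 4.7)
The plan is to exploit Remark~\ref{rem0} to translate the problem of maximizing the weight of ${\rm S}$ into that of minimizing its inflection $\wt{I}_S = \sum_{s\in {\rm S}^*} s$, and then to organize the analysis by the residue partition ${\rm S}^* = {\rm S}_0 \sqcup {\rm S}_1 \sqcup {\rm S}_2$ modulo $3$. Semigroup closure under addition yields the three inclusions $({\rm S}_i + {\rm S}_j)\cap[2g] \sub {\rm S}_{(i+j)_3}$; the crucial pair will be $({\rm S}_1 + {\rm S}_1)\cap[2g] \sub {\rm S}_2$ and $({\rm S}_2+{\rm S}_2)\cap [2g] \sub {\rm S}_1$, which couple ${\rm S}_1$ and ${\rm S}_2$ to each other.

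The first major step is to pin down ${\rm S}_0$. The $(3,\ga)$-hyperelliptic conditions force ${\rm S}\cap[6\ga]$ to consist of exactly $\ga$ multiples of $3$, with $6\ga$ among them. A short exchange argument shows that, among all admissible sum-closed choices for these $\ga$ elements, the partial sum $\sum s$ is minimized by the greedy choice $\{6,12,\dots,6\ga\}$; in particular $6\in {\rm S}^{bc}_0$. Condition (2) then gives $6\ga+3\in{\rm S}$, and semigroup closure with $6\ga, 6\ga+3$ generates every sufficiently large multiple of $3$. The hypothesis $g\gg\ga$ lets us conclude that every multiple of $3$ in $(6\ga, 2g]$ lies in ${\rm S}_0$, which pins down $\#{\rm S}_0=\lfloor 2g/3\rfloor-\ga$ and identifies the exact set of elements of ${\rm S}_0$.

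The second step is to describe ${\rm S}_1$ and ${\rm S}_2$. Let $u_1=\min {\rm S}_1$, and assume (up to the symmetry between residues $1$ and $2$) that $(u_1)_3=1$. For fixed $u_1$ and fixed cardinalities $\#{\rm S}_1,\#{\rm S}_2$, an exchange argument—swapping any element of ${\rm S}_1$ for the next smaller unoccupied element of the same residue class, and simultaneously updating ${\rm S}_2$ to respect closure—shows that the inflection-minimizing ${\rm S}_1$ is the shortest arithmetic progression $\{u_1, u_1+3,\dots, u_1+3\tau\}$ of common difference $3$ starting at $u_1$, \emph{except} in case (2), where $g-\ga\equiv 2\pmod 3$ forces the isolated-element configuration $\{u_1\}\sqcup\{u_1+6, u_1+9,\dots\}$. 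In either scenario, $({\rm S}_1+{\rm S}_1)\cap[2g]$ is itself of AP form (or an AP together with $\{2u_1\}$), and the minimum-inflection choice of ${\rm S}_2$ is to take it equal to this set: any extra element above it enlarges the inflection, while any omission violates closure or the prescribed cardinality.

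The final step is to optimize over $u_1$ and the size split. The constraint $\#{\rm S}_1 + \#{\rm S}_2 = g - \#{\rm S}_0 = \lceil g/3\rceil + \ga$, combined with the AP description of ${\rm S}_1, {\rm S}_2$ and the bounds $u_1+3\tau \leq 2g$ and $2u_1 + 3\nu\leq 2g$, determines $u_1$ via elementary slot-counting, yielding $u_1 = g-\ga+1-2\lfloor (g-\ga)_3/2\rfloor$ and, by back-substitution, the stated formulas for $\tau$ and $\nu$. The main obstacle I foresee is the subsequent bookkeeping: verifying, for each residue $(g-\ga)_3$ and for each of the two subcases $\lfloor(\ga)_3/2\rfloor =(u_1)_3$ versus $\lfloor(\ga)_3/2\rfloor \neq (u_1)_3$, that the resulting ${\rm S}^{bc,*}$ is a bona fide $(3,\ga)$-hyperelliptic numerical semigroup (i.e.\ all closure relations hold) and that no competing cardinality split $(\#{\rm S}_1, \#{\rm S}_2)$ beats the claimed one. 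This accounting—not any single sharp conceptual step—is where I expect most of the technical difficulty to lie.
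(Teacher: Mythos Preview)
Your framework (residue partition ${\rm S}^*={\rm S}_0\sqcup{\rm S}_1\sqcup{\rm S}_2$, closure constraints, inflection minimization via Remark~\ref{rem0}) matches the paper's exactly. However, two aspects of your plan create genuine gaps.

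\textbf{Order of the argument.} You propose to pin down ${\rm S}_0$ \emph{first} by a ``short exchange argument'' showing that $\{6,12,\dots,6\ga\}$ minimizes $\sum_{s\in{\rm S}_0}s$. But what must be minimized is the \emph{total} inflection $\wt{I}_{\rm S}$, and ${\rm S}_0$ is coupled to ${\rm S}_1,{\rm S}_2$ through the closure relations $u_1+\overline{{\rm S}}_0\subset{\rm S}_{(u_1)_3}$ and (in the case $(g-\ga)_3=2$) through the extra element $2u_1-n_1\in{\rm S}_{3-(u_1)_3}$. Changing ${\rm S}_0$ therefore changes the optimal ${\rm S}_1,{\rm S}_2$, and a smaller $\sum_{s\in{\rm S}_0}s$ does not \emph{a priori} yield a smaller $\wt{I}_{\rm S}$. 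The paper handles this by working in the opposite order: it first fixes an \emph{arbitrary} ${\rm S}_0$, carries out the full analysis of ${\rm S}_1,{\rm S}_2$ (Steps~1--2), and only then compares across choices of ${\rm S}_0$ (Proposition~\ref{chooseT0}). That final comparison is not a one-line exchange: it uses the output of Steps~1--2 to reduce the cross-${\rm S}_0$ discrepancy in $\wt{I}_{{\rm S}_1}+\wt{I}_{{\rm S}_2}$ to a quantity of size $n_1+O(1)$, and then shows that $\wt{I}_{{\rm S}_0}-\wt{I}_{{\rm T}_0}$ is of order $g\cdot(n_1-2)$, which dominates. Your exchange argument for ${\rm S}_0$ cannot stand on its own without this.

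\textbf{Underestimating the $u_1$-optimization.} You describe the choice of $u_1$ and the size split $(\#{\rm S}_1,\#{\rm S}_2)$ as ``elementary slot-counting'' followed by bookkeeping. In the paper this is by far the hardest part. One must (i) bound $\chi=u_1-u_1^H$ a priori (Lemma~\ref{boundchi}), (ii) show that for fixed $u_1$ the residue set ${\rm S}_{(u_1)_3}$ must have maximal cardinality (Proposition~\ref{maxT1}), (iii) compare across different values of $u_1$ and across both residue classes $(u_1)_3$ (Lemmas~\ref{T1contained}, \ref{T1form}, Proposition~\ref{maxT1v2}), and (iv) in Step~2 rule out a finite but nontrivial list of competing configurations (e.g.\ the $\ell>1$ deletions and the $e=1$ double-deletion cases) by explicit inflection estimates. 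Your proposed exchange (``swap an element of ${\rm S}_1$ for the next smaller unoccupied element'') does not obviously preserve closure under $+{\rm S}_0$, and does not by itself explain why the isolated-element configuration appears precisely when $(g-\ga)_3=2$; in the paper this emerges from Claim~\ref{aux3} and the comparison $\wt{I}_{\rm S}-\wt{I}_{\rm T}=n_1-9$, which again depends on ${\rm S}_0$.
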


A straightforward calculation shows that the weight of the semigroup of Theorem~\ref{maximal_weight_conjecture} is equal to $\frac{B(5B+1)}{6}+3\ga^2 -\binom{g+1}{2}$ whenever $g-\ga \not\equiv 2 \text{ (mod }3)$, and $\frac{B(5B-3)}{6}+3\ga^2+6- \binom{g+1}{2}$ otherwise, where $B=g-\ga$.

\subsection{Roadmap} The core of this paper is {\bf Section~\ref{maximal_weight_conj_proof}}, in which we prove Theorem~\ref{maximal_weight_conjecture}. To do, we argue in three stages. In {\bf Step 1}, we show that being of maximal weight imposes strong conditions on the value of minimal insertion $u_1$ beyond $3\langle 2,2\ga+1\rangle$, and on the cardinalities of the residue sets ${\rm S}_i$. We begin by assuming that $u_1$ itself is fixed; in Proposition~\ref{maxT1} we establish that maximal-weight semigroups always are associated with residue sets ${\rm S}_{(u_1)_3}$ of maximal size. We subsequently allow $u_1$ to vary. In Lemmas~\ref{T1contained} and \ref{T1form} we show, roughly speaking, that maximal-weight semigroups are associated with minimal insertions that lie between $g-3\ga+1$, the minimum possible value, and $g-\ga+1$, the value for which $\#{\rm S}_{(u_1)_3}$ is (uniformly) maximized. In Proposition~\ref{maxT1v2}, we show that $\#{\rm S}_{(u_1)_3}$ {\it must} be maximal; doing so ensures that the difference between ${\rm S}_{3-(u_1)_3}$ and the ``quadratic" sum ${\rm S}_{(u_1)_3}+{\rm S}_{(u_1)_3}$ is {\it minimized}.

\medskip
{\fl In} {\bf Step 2}, we carry out a case-by-case comparison of all semigroups associated with a fixed choice of ${\rm S}_0$, and for which ${\rm S}_{(u_1)_3}$ is maximal. Using Claim~\ref{aux3} (and variations thereof, all of which involve finer analyses of how ${\rm S}_{(u_1)_3}+{\rm S}_{(u_1)_3}$ sits inside of ${\rm S}_{3-(u_1)_3}$), we obtain a finite list of candidate maximal-weight semigroups ${\rm S}$.



\medskip
{\fl In} {\bf Step 3}, we use Proposition~\ref{chooseT0} to show that the ${\rm S}$ of maximal weight are precisely those given in the statement of Theorem~\ref{maximal_weight_conjecture}. In particular, $6$ necessarily belongs to ${\rm S}_0$, which amounts to the statement that ${\rm S}$, if it is indeed geometrically realizable, arises from a cover of a hyperelliptic curve marked in a hyperelliptic Weierstrass point.

\medskip
{\fl {\bf Section~\ref{geometric_realization}}} is of a more speculative nature. Our Conjecture~\ref{maximal_weight_conj} explicitly predicts the structure of an $(N,\ga)$-hyperelliptic semigroup of maximal weight, whenever $N \geq 3$ is prime, $g \gg \ga$ and some additional divisibility conditions are satisfied by $N$, $g$, and $\ga$. In Proposition~\ref{buchweitz_for_S_N}, we show that the semigroups ${\rm S}_N$ in question, which generalize the numerical semigroup of maximal weight given in Theorem~\ref{maximal_weight_conjecture}, satisfy a well-known necessary criterion for geometric realizability due to Buchweitz. The upshot is that ascertaining the realizability of maximal-weight $(N,\ga)$-hyperelliptic semigroups in general is a delicate question.

\section{Proof of Theorem~\ref{maximal_weight_conjecture}}\label{maximal_weight_conj_proof}
Let ${\rm H}=H(g,\ga)$ denote the set of $(3,\ga)$-hyperelliptic semigroups of genus $g$. Remark~\ref{rem0} shows that in order to maximize the $\rm S$-weight of $\rm T \in {\rm H}$, it suffices to minimize the inflection $\wt{I}_{\rm T}$. 
To this end, let ${\rm T}^\ast:= {\rm T}_0 \sqcup {\rm T}_1 \sqcup {\rm T}_2$, where $\rm T_i=\{t\in {\rm T}^\ast| (t)_3=i\}$, and let $\ov{\rm T}_{0}:=\rm T_0\sqcup{\{0\}}$. We define
\[
u_1(\rm T):=\text{ min }\{t \in\rm T| (t)_3\neq 0\} \text{, }
u_1^H:=\text{ min }\{u_1(\rm T)| \rm T \in {\rm H}\}\text{ and }
\chi ({\rm T})=u_1({\rm T})-u_1^H.
\]

\begin{rem}\label{u1_remarks} Given a reference semigroup ${\rm S}$ of genus $g$, let
${\rm H}^{{\rm S}_0}
:=\{{\rm T\in \rm H}|{\rm T}_0={\rm S}_0\}$. 
A straightforward genus calculation shows that $u_1^H= g-3\ga+1+\lfloor\frac{(g)_3}{2}\rfloor$. \footnote{Cf. \cite[Lemma 3.2]{CT}, in which this explicit formula is replaced by an inequality that omits the perturbation term $\lfloor\frac{(g)_3}{2}\rfloor$.}
\end{rem}

\subsection{Step 1}
For a fixed choice of ${\rm S_0}$, we will characterize the semigroup in ${\rm H}^{{\rm S}_0}$ with minimal inflection. To do so, it is useful to further filter semigroups according to their associated values of $u_1=u_1({\rm S})$. Accordingly, we set
\[
{\rm H}_{u_1}:= \{{\rm T}\in {\rm H}^{{\rm S}_0}: u_1({\rm T})=u_1\}.
\]

\begin{lemma}\label{T2_bound}
Given ${\rm T}\in {\rm H}_{u_1}$, we have $\# \rm{T}_{3-(u_1)_3} \leq \ga+ \lceil \frac{\chi}{3} \rceil$, where $\chi:= u_1-u_1^H= u_1-g+3\ga-1-\lfloor\frac{(g)_3}{2}\rfloor$.
\end{lemma}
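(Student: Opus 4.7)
The plan is to decompose $\rm{T}_{3-r}$ (writing $r := (u_1)_3$) as a disjoint union of a ``forced'' part $A$ and an ``extras'' part $B$, and to bound each separately.

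Define $A := \{y \in \rm{T}_{3-r} : y - 2u_1 \in \ov{\rm T}_0\}$, the residue-$(3-r)$ elements automatically placed in $\rm{T}_{3-r}$ by the sub-semigroup $\langle \ov{\rm T}_0, u_1 \rangle \subseteq \rm T$. Via the bijection $y \mapsto y - 2u_1$, one has $|A| = |\ov{\rm T}_0 \cap [0, 2g - 2u_1]|$. Using $u_1 = u_1^H + \chi$ and the explicit formula $u_1^H = g - 3\gamma + 1 + \lfloor (g)_3/2 \rfloor$, a direct computation yields $2g - 2u_1 = 6\gamma - 2 - 2\lfloor (g)_3/2 \rfloor - 2\chi$, which is strictly less than $6\gamma$. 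Since the $(3, \gamma)$-hyperelliptic condition forces the first $\gamma$ positive elements of $\rm T$ to be multiples of $3$ with $n_\gamma = 6\gamma$, one has $\rm{T}_0 \cap [1, 2g - 2u_1] \subseteq \{n_1, \dots, n_{\gamma - 1}\}$; together with $0 \in \ov{\rm T}_0$ this gives $|A| \leq \gamma$.

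For the extras $B := \rm{T}_{3-r} \setminus A$, the goal is $|B| \leq \lceil \chi/3 \rceil$. Each $y \in B$ is either (i) of the form $y \in (u_1, 2u_1)$, in which case $y$ must be a residue-$(3-r)$ generator of $\rm T$ -- any decomposition $y = a + b$ with $a, b \in \rm T$ and $y < 2u_1$ forces $\min(a, b) < u_1$ to be a multiple of $3$, whence $y - \min(a, b) \in (u_1, 2u_1) \cap \rm{T}_{3-r}$ is a strictly smaller element of the same type, so by induction the minimal such $y$ cannot decompose -- or (ii) $y \geq 2u_1$ with $y - 2u_1$ a residue-$0$ gap of $\rm T$. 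In either case $y$ certifies that $\rm T$ strictly contains its core $\langle \ov{\rm T}_0, u_1 \rangle$.

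The bound $|B| \leq \lceil \chi/3 \rceil$ then follows from a careful accounting of gaps via the genus identity $g = \gamma + \mu + t + \nu$, in which $\mu$ counts residue-$0$ gaps in $[6\gamma + 1, 2g]$, $t = u_1 - 1 - \lfloor u_1/3 \rfloor$ counts the (necessarily gap) non-multiples of $3$ in $[1, u_1 - 1]$, and $\nu$ counts non-multiple-of-$3$ gaps in $[u_1, 2g]$. Since $t$ grows with $u_1$ at rate $2/3$, shifting $u_1$ up from $u_1^H$ by $\chi$ leaves room for at most $\lceil \chi/3 \rceil$ residue-$(3-r)$ integers in $[u_1, 2g]$ to avoid being gaps beyond the $\gamma$ already captured in $A$. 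The main obstacle is rendering this last step rigorous: one must case-split on $(g)_3 \in \{0, 1, 2\}$ and $(\chi)_3$, since shifting $u_1$ across residue classes simultaneously alters the value of $r$ and reshuffles which residue-$(3-r)$ integers lie in $[2u_1, 2g]$. Combining the two bounds yields $\# \rm{T}_{3-r} = |A| + |B| \leq \gamma + \lceil \chi/3 \rceil$.
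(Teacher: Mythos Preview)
Your argument has a real gap, and it is not merely missing case-work: the intermediate bound $|B|\le\lceil\chi/3\rceil$ you aim for is \emph{false} in general. Take $\gamma=2$, $g\equiv 0\pmod 3$ large, ${\rm T}_0=3\langle 2,5\rangle\cap[2g]$ (so $n_1=6$, $n_2=12$, and the residue-$0$ gaps are $3,9$), and $u_1=g-2$, so $\chi=3$ and $r=(u_1)_3=1$. Adjoin the two residue-$2$ generators $2g-10$ and $2g-1$; one checks directly that the resulting ${\rm T}$ is a $(3,2)$-hyperelliptic semigroup of genus $g$ with ${\rm T}_1=(u_1+\ov{\rm T}_0)\cap[2g]$ and ${\rm T}_2=\{2g-10,\,2g-4,\,2g-1\}$. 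Here $2g-2u_1=4<n_1$, so $A=\{2u_1\}=\{2g-4\}$ and $|A|=1$, whence $|B|=2>1=\lceil\chi/3\rceil$. The underlying problem is structural: once $\chi$ is large enough that $2g-2u_1<n_{\gamma-1}$, your ``forced'' set $A$ shrinks well below $\gamma$, and $B$ has to be allowed to grow to compensate. The two pieces of your split cannot be bounded separately by $\gamma$ and $\lceil\chi/3\rceil$.

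The paper's proof sidesteps this entirely by working in the \emph{other} nonzero residue class. From $\#{\rm T}_0+\#{\rm T}_{(u_1)_3}+\#{\rm T}_{3-(u_1)_3}=g$ together with the containment $(u_1+\ov{\rm T}_0)\cap[2g]\subseteq{\rm T}_{(u_1)_3}$ one gets
\[
\#{\rm T}_{3-(u_1)_3}\ \le\ g-\#{\rm T}_0-\#\bigl((u_1+\ov{\rm T}_0)\cap[2g]\bigr),
\]
and an explicit evaluation of the right-hand side yields $\gamma+\lceil\chi/3\rceil$. The set $(u_1+\ov{\rm T}_0)\cap[2g]$ has cardinality roughly $g/3-\gamma$ \emph{independently of $\chi$}, so bounding $\#{\rm T}_{(u_1)_3}$ from below is robust; by contrast your $A=(2u_1+\ov{\rm T}_0)\cap[2g]$ sits inside ${\rm T}_{3-(u_1)_3}$ and its size collapses as $\chi$ grows, which is why isolating it does not lead to an upper bound.
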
  
\begin{proof}
Since the cardinality of $\rm{T}_{0}$ is fixed, the cardinality of $\rm{T}_{3-(u_1)_3}$ is maximized when $\# {\rm T}_{(u_1)_3}$ is minimized, i.e., when ${\rm T}_{(u_1)_3}=u_1+\ov{\rm T}_{0}$. We then have
${\rm T}^{\ast}= {\rm T}_0 \sqcup (u_1+\ov{{\rm T}}_{0}) \sqcup {\rm T}_{3-(u_1)_3}$, and as $(u_1+\ov{\rm T}_{0})$ fails to be an arithmetic sequence by precisely $\ga$ ``gaps" in $[2g]$ that are translations by $u_1$ of the $\ga$ multiples of 3 not contained in ${\rm T}_0$, it follows that
\[
\begin{split}
\#\rm{T}_{3-(u_1)_3}&=g-(\#{\rm T}_0+\#(u_1+\ov{\rm T}_{0})) \\
&= g-\bigg(\frac{2g-(2g)_3}{3}-\ga+ \frac{(2g-u_1)-(2g-u_1)_3}{3}+1-\ga \bigg) \\
&=-\frac{g}{3}+ 2\ga-1+ \frac{(2g)_3+(2g-u_1)_3+ u_1}{3} \\
&=\ga+ \frac{\chi}{3}+ \frac{(-g)_3+ (g+2-\lfloor\frac{(g)_3}{2}\rfloor -\chi)_3-2}{3} \\
&\leq \ga+ \bigg \lceil \frac{\chi}{3} \bigg \rceil.
\end{split}
\]
\end{proof}
\begin{lemma}\label{boundchi}
Assume $g \gg \ga$. 
We have $\chi<6\ga$ for any $(3,\ga)$-hyperelliptic semigroup of maximal weight.
\end{lemma}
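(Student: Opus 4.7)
My plan is to argue by contraposition: assuming ${\rm S} \in H(g,\gamma)$ satisfies $\chi({\rm S}) \geq 6\gamma$, I exhibit a $(3,\gamma)$-hyperelliptic semigroup of the same genus with strictly smaller inflection---namely the Carvalho--Torres semigroup ${\rm S}^{ct}$ of Construction~\ref{gsmall}, with a minor shift when $g \equiv 2 \pmod 3$---thus contradicting maximal weight of ${\rm S}$ via Remark~\ref{rem0}.

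The first step is to unwind the hypothesis. Using the explicit formula $u_1^H = g - 3\gamma + 1 + \lfloor (g)_3/2 \rfloor$ from Remark~\ref{u1_remarks}, the inequality $\chi \geq 6\gamma$ rewrites as $u_1 \geq g + 3\gamma + 1 + \lfloor (g)_3/2 \rfloor > g$. Consequently every element of ${\rm S}_{(u_1)_3} \cup {\rm S}_{3-(u_1)_3}$ lies strictly above $g$, and ${\rm S}^* \cap [1,g] \subseteq {\rm S}_0$. By contrast, ${\rm S}^{ct}$ has $u_1^{ct} = u_1^H$ and so contains a family of non-multiples of $3$ in the interval $[u_1^H, g]$: namely the translates $u_1^H + s$ for $s \in \ov{{\rm S}^{ct}_0}$ with $s \leq g - u_1^H = 3\gamma - 1 - \lfloor (g)_3/2 \rfloor$, of cardinality on the order of $\gamma/2$.

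The second step is a pairwise inflection comparison. Writing $\wt{I}_{\rm S} - \wt{I}_{{\rm S}^{ct}} = \sum_{s \in {\rm S}^* \setminus {\rm S}^{ct,*}} s - \sum_{s \in {\rm S}^{ct,*} \setminus {\rm S}^*} s$ and using $|{\rm S}^* \setminus {\rm S}^{ct,*}| = |{\rm S}^{ct,*} \setminus {\rm S}^*|$, each swap of an element of ${\rm S}^{ct,*} \setminus {\rm S}^*$ lying in $[u_1^H, g]$ (forced absent from ${\rm S}^*$ because $u_1 > g$) for a compensating element of ${\rm S}^* \setminus {\rm S}^{ct,*}$ (which must then lie in $[u_1, 2g]$) contributes at least $u_1 - g \geq 3\gamma + 1$ to $\wt{I}_{\rm S} - \wt{I}_{{\rm S}^{ct}}$. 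Summing over the $\Omega(\gamma)$ non-multiple-of-$3$ swaps, and bounding any auxiliary contribution from multiple-of-$3$ swaps (from ${\rm S}_0 \triangle {\rm S}^{ct}_0$) by $O(\gamma^2)$ under the assumption ${\rm S}_0 \cap (6\gamma, 2g] = {\rm S}^{ct}_0 \cap (6\gamma, 2g]$, the net difference $\wt{I}_{{\rm S}} - \wt{I}_{{\rm S}^{ct}}$ is strictly positive of order at least $\Omega(\gamma^2)$ for $g$ large relative to $\gamma$.

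I anticipate the main obstacle to be this last bookkeeping step, specifically controlling cases when ${\rm S}_0 \neq {\rm S}^{ct}_0$ so that the symmetric difference mingles multiples and non-multiples of $3$ in complicated ways. A clean workaround is to replace ${\rm S}^{ct}$ throughout by a tailored reference semigroup ${\rm S}^{\mathrm{ref}}$ with ${\rm S}_0^{\mathrm{ref}} = {\rm S}_0$ but $u_1^{\mathrm{ref}} = u_1^H$; the only analytic ingredient needed is that ${\rm S}^{\mathrm{ref}}$ defines a valid $(3,\gamma)$-hyperelliptic semigroup of genus $g$, which follows from a semigroup-closure verification analogous to the one implicit in Construction~\ref{gsmall}. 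The case $g \equiv 2 \pmod 3$ (outside the scope of Construction~\ref{gsmall}) is handled by an analogous construction whose minimal insertion is $u_1^H = g - 3\gamma + 2$.
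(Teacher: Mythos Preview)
Your workaround---comparing ${\rm S}$ against a reference semigroup ${\rm S}^{\mathrm{ref}}$ sharing the same ${\rm S}_0$ but with $u_1^{\mathrm{ref}}$ near $u_1^H$---is exactly the paper's strategy. You also correctly diagnose that the direct comparison with ${\rm S}^{ct}$ (which has a specific ${\rm S}_0^{ct}$) entangles residue classes in an inconvenient way.

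The gap lies in the ``pairwise swap'' bookkeeping you sketch. Your claim that each element of ${\rm S}^{\mathrm{ref},*}\setminus{\rm S}^*$ lies in $[u_1^H,g]$ and can be paired against an element of ${\rm S}^*\setminus{\rm S}^{\mathrm{ref},*}$ in $[u_1,2g]$, contributing at least $u_1-g$ per swap, is not justified: the reference semigroup has residue-$(3-r)$ elements concentrated near $2g$ (its ${\rm S}_{3-r}^{\mathrm{ref}}$ sits in $[2u_1^H,2g]\approx[2g-6\gamma,2g]$), and these need not all lie in ${\rm S}^*$. So elements of ${\rm S}^{\mathrm{ref},*}\setminus{\rm S}^*$ can be large, and the termwise inequality fails. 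Your asymptotic bookkeeping ($\Omega(\gamma^2)$ positive versus $O(\gamma^2)$ negative) therefore does not resolve the sign.

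The paper replaces the swap heuristic with a direct estimation of $\wt{I}_{{\rm T}_{(u_1)_3}}+\wt{I}_{{\rm T}_{3-(u_1)_3}}$ from below and $\wt{I}_{{\rm S}_{(u_1')_3}}+\wt{I}_{{\rm S}_{3-(u_1')_3}}$ from above. Two ingredients you are missing: first, the reference semigroup must be built with a case split on $(\chi)_3$---when $(\chi)_3\neq 0$ one has $u_1'=u_1^H+(\chi)_3$, and $({\rm S}_{(u_1')_3}+{\rm S}_{(u_1')_3})\cap[2g]$ is too small by $1+\lceil(g)_3/2\rceil$ elements, which must be inserted by hand. Second, one needs a lower bound on the smallest element of ${\rm T}_{3-(u_1)_3}$ beyond the obvious $u_1$; the paper shows it is at least $2g-6\gamma-3s_2+3$ via a counting argument with $\bar{\rm T}_0$-translates. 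Combining these yields a lower bound on $\wt{I}_{\rm T}-\wt{I}_{\rm S}$ of the form $\gamma(g-15\gamma-10)-6$ plus nonnegative terms, which is linear in $g$ and hence positive for $g\gg\gamma$---a stronger conclusion than your $\Omega(\gamma^2)$.
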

\begin{proof}
Let {\rm T} be a semigroup such that $\chi:=u_1({\rm T})-u_1^H\geq 6\ga$; then $u_1=u_1({\rm T})\geq g+3\ga+1$. 
We will show that there exists a semigroup ${\rm S} \in {\rm H}^{{\rm T}_0}$ for which $\wt{I}_{S}<\wt{I}_{T}$. 

\medskip
To this end, fix ${\rm S}_0:={\rm T}_0$, $u_1^{\pr}=u_1({\rm S}):=u_1({\rm T})-(\chi-(\chi)_3)$, and set ${\rm S}_{(u_1^{\pr})_3}:=u_1^{\pr}+\ov{{\rm S}}_0$. Note that if $(\chi)_3=0$, we have $u^{\pr}_1=u_1^H$; it follows necessarily that ${\rm S}={\rm S}^{ct}$ as in Construction~\ref{gsmall}, and ${\rm S}_{3-(u_1^{\pr})_3}=({\rm S}_{(u_1^{\pr})_3}+{\rm S}_{(u_1^{\pr})_3})\cap[2g]$. On the other hand, if $(\chi)_3\neq0$, we have $u^{\pr}_1=u_1^H+(u_1^H)_3$ and
\[
\#{\rm S}_{u^{\pr}_1}=\bigg\lceil\frac{g}{3}\bigg\rceil-\bigg\lceil\frac{(g)_3}{2}\bigg\rceil \text{, which forces } \#{\rm S}_{3-(u^{\pr}_1)_3}=\ga+\bigg\lceil\frac{(g)_3}{2}\bigg\rceil.
\]
However in this case $\#({\rm S}_{(u_1^{\pr})_3}+{\rm S}_{(u_1^{\pr})_3})\cap[2g]=\ga-1$, so ${\rm S}_{3-(u^{\pr}_1)_3} \cap [2g]$ contains $1+\lceil\frac{(g)_3}{2}\rceil$ additional elements $y_1, \dots, y_{1+\lceil\frac{(g)_3}{2}\rceil}$ that belong to the complement of $({\rm S}_{(u_1^{\pr})_3}+{\rm S}_{(u_1^{\pr})_3})\cap[2g]$.

{\fl On} the other hand, from $u_1=u^{\pr}_1+(\chi-(\chi)_3)$ it follows that \[
\#(u_1+\ov{\rm T}_0)=\#{\rm S}_{(u^{\pr}_1)_3}-\frac{(\chi-(\chi)_3)}{3}.
\]
Note that $\#(u_1+\ov{\rm T}_0)$ fails to be an arithmetic sequence by precisely $\ga$ ``gaps" in $[2g]$. Now let $J=\ga+\frac{\chi-(\chi)_3}{3}$, and let $\{x_1,...,x_{J}\}$ denote the subset containing all elements of ${\rm T}_{(u_1)_3}\setminus(u_1+\ov{\rm T}_0)$ together with (if necessary) the smallest elements of ${\rm T}_{3-(u_1)_3}$; we then have $x_i> u_1$ for every $i$. There are $\#{\rm S}_{3-(u^{\pr}_1)_3}+\ga$ remaining elements in ${\rm T}_{3-(u_1)_3}$, and the smallest of these, call it $y$, is at least $2g-6\ga-3s_2+3$.
Indeed, otherwise $\#(y+\ov{\rm T}_0)\cap[2g]>s_2+\ga$; moreover, the largest $s_2$ elements of $y+\ov{\rm T}_0$ form an arithmetic sequence, and the $k$th among these is greater than or equal to $(2g-3)s_2-3k$, where $k=0,\dots,s_2-1$.
{\fl All} of the pieces are now in place to show that $\wt{I}_{\rm T}-\wt{I}_{\rm S}$ is positive. 
As ${\rm T}_0={\rm S}_0$, it suffices to show that
$(\wt{I}_{{\rm T}_{(u_{1})_3}}+\wt{I}_{{\rm T}_{3-(u_1)_3}})-(\wt{I}_{{\rm S}_{(u^{\pr}_1)_3}}+\wt{I}_{{\rm S}_{3-(u^{\pr}_1)_3}})> 0$,
where $\wt{I}_{{\rm T}_i}$ denotes the contribution of ${\rm T}_i$ to the inflection of the semigroup ${\rm T}$. Letting $s_1=\#{\rm S}_{(u^{\pr}_1)_3}$, $t_1=\#{\rm T}_{(u_1)_3}$, $s_2=\#{\rm S}_{3-(u^{\pr}_1)_3}$ and $t_2=\#{\rm T}_{3-(u_1)_3}$, we find that

{\small
\[
\begin{split}
\wt{I}_{{\rm S}_{(u^{\pr}_1)_3}}+\wt{I}_{{\rm S}_{3-(u^{\pr}_1)_3}}&\leq\ga u^{\pr}_1+\sum_{i=1}^{\ga-1}n_i
+\bigg(\frac{\chi-(\chi)_3-6\ga}{3}\bigg)(u^{\pr}_1+6\ga)+3\sum_{i=1}^{\delta}i +2\ga(u^{\pr}_1+\chi-(\chi)_3) \\
&+3\sum_{i=1}^{2\ga-1}i
+R(u^{\pr}_1+\chi-(\chi)_3+6\ga)+3\sum_{i=1}^{R-1}i+2gs_2-3\sum_{i=1}^{s_2-1}i
\end{split}
\]
}
where $\delta=\frac{\chi-(\chi)_3-6\ga}{3}$ and $R=s_1-3\ga-\delta$ and
{\small
\[
\begin{split}
\wt{I}_{{\rm T}_{(u_{1})_3}}+\wt{I}_{{\rm T}_{3-(u_1)_3}}&\geq\ga u_1+\sum_{i=1}^{\ga-1}n_i+R(u_1+6\ga)+3\sum_{i=1}^{R-1}i+\ga u_1 +\bigg(\frac{\chi-(\chi)_3-6\ga}{3}\bigg)u_1 \\
&+3\sum_{i=1}^{\delta}i+\ga(2g-6\ga-3s_2+3)+3\sum_{i=1}^{\ga-1}i+s_2(2g-3)-3\sum_{i=1}^{s_2-1}i.
\end{split}
\]
}
{\fl It} follows that
{\small
\begin{equation}\label{lemma2.3ineq}
\begin{split}
(\wt{I}_{{\rm T}_{(u_{1})_3}}+\wt{I}_{{\rm T}_{3-(u_1)_3}})-(\wt{I}_{{\rm S}_{(u^{\pr}_1)_3}}+\wt{I}_{{\rm S}_{3-(u^{\pr}_1)_3}})&\geq \delta(\chi-(\chi)_3-6\ga)+\ga(g-3\ga-3s_2+2)-3s_2-3\sum_{i=\ga}^{2\ga-1}i.
\end{split}
\end{equation}
}
{\fl However}, as $s_2 \leq \ga+1$, the right-hand side of \eqref{lemma2.3ineq} is bounded below by
\[
\delta(\chi-(\chi)_3-6\ga)+\ga(g-15\ga-10)-6
\]
which is strictly positive whenever $g\gg\ga$.
\end{proof} 

\begin{prop}\label{maxT1}
Assume $g \gg \ga$. Suppose that semigroups ${\rm S}$ and ${\rm T}$ belong to ${\rm H}_{u_1}$, with $\chi:=u_1-u_1^H<6\ga$ and $\# \rm{T}_{(u_1)_3}>\# \rm{S}_{(u_1)_3}$. Then $W_{\rm T}>W_{\rm S}$.
\end{prop}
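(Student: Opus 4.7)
By Remark~\ref{rem0}, showing $W_{\rm T} > W_{\rm S}$ is equivalent to proving $\wt{I}_{\rm T} < \wt{I}_{\rm S}$. Since ${\rm T}_0 = {\rm S}_0$, this reduces to
\[
(\wt{I}_{{\rm T}_{(u_1)_3}} + \wt{I}_{{\rm T}_{3-(u_1)_3}}) - (\wt{I}_{{\rm S}_{(u_1)_3}} + \wt{I}_{{\rm S}_{3-(u_1)_3}}) < 0,
\]
where $\wt{I}_{{\rm U}_i}$ denotes the contribution of ${\rm U}_i$ to the inflection. Set $k := \#{\rm T}_{(u_1)_3} - \#{\rm S}_{(u_1)_3}$; the cardinality identity $\#{\rm T}^* = \#{\rm S}^* = g$ yields $k = \#{\rm S}_{3-(u_1)_3} - \#{\rm T}_{3-(u_1)_3} \geq 1$, and Lemmas~\ref{T2_bound} and~\ref{boundchi} jointly bound $\#{\rm S}_{3-(u_1)_3}, \#{\rm T}_{3-(u_1)_3}$ by $3\ga$.

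\textbf{Analysis in residue $(u_1)_3$.} Since $u_1 \in {\rm T} \cap {\rm S}$, both ${\rm T}_{(u_1)_3}$ and ${\rm S}_{(u_1)_3}$ contain the forced subset $(u_1 + \ov{\rm T}_0) \cap [2g]$. For any $v \in {\rm T}_{(u_1)_3}$ with $v > u_1$, writing $v = u_1 + x$ shows $x$ is a non-negative multiple of $3$; if $x \notin \ov{\rm T}_0$, then $x$ is a residue-$0$ gap of ${\rm T}$. Under the $(3,\ga)$-hyperelliptic cardinality $\#{\rm T}_0 = \lfloor 2g/3 \rfloor - \ga$ (consistent with Lemma~\ref{T2_bound}), there are exactly $\ga$ such gaps, each of value at most $6\ga - 3$. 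The "free positions" admissible for ${\rm T}_{(u_1)_3}$ and ${\rm S}_{(u_1)_3}$ are therefore $\{u_1 + \ell : \ell \text{ a residue-}0 \text{ gap of } {\rm T}_0\} \subset [u_1+3, u_1+6\ga-3]$, and in particular
\[
\wt{I}_{{\rm T}_{(u_1)_3}} - \wt{I}_{{\rm S}_{(u_1)_3}} \leq k(u_1 + 6\ga - 3).
\]

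\textbf{Analysis in residue $3-(u_1)_3$ and conclusion.} I next claim that the $k$ extra elements of ${\rm S}_{3-(u_1)_3}$ absent from ${\rm T}_{3-(u_1)_3}$ each have value at least $2u_1 - O(\ga)$. Any "low generator" $v$ of residue $3-(u_1)_3$ lying in $(u_1, 2u_1)$ would satisfy $u_1 + v \in {\rm T}_0$; since $u_1 + v \in (2u_1, 3u_1)$ and $2u_1 \geq 2g - 6\ga + 2$ (as $\chi < 6\ga$ gives $u_1 \leq g + 3\ga$), only $O(\ga)$ such $v$ are possible. Consequently
\[
\wt{I}_{{\rm S}_{3-(u_1)_3}} - \wt{I}_{{\rm T}_{3-(u_1)_3}} \geq k(2u_1 - O(\ga)),
\]
and combining the two residue bounds yields $\wt{I}_{\rm S} - \wt{I}_{\rm T} \geq k(u_1 - O(\ga)) > 0$ for $g \gg \ga$, using $u_1 \geq u_1^H \geq g - 3\ga + 1$.

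\textbf{Main obstacle.} The principal technical hurdle is tracking the semigroup-closure inclusions in residue $3-(u_1)_3$: we have ${\rm U}_{3-(u_1)_3} \supset ({\rm U}_{(u_1)_3} + {\rm U}_{(u_1)_3}) \cap [2g]$ for ${\rm U} \in \{{\rm T}, {\rm S}\}$, and different choices of free positions in residue $(u_1)_3$ determine different forced sums in residue $3-(u_1)_3$. Matching the $k$ "extra" elements of ${\rm S}_{3-(u_1)_3}$ with corresponding "missing" elements of ${\rm T}_{3-(u_1)_3}$ will require a case analysis on the overlap $T_{\ell} \cap S_{\ell}$ of the free-position selections and on which low generators (if any) each semigroup employs, in the same spirit as the argument used in Lemma~\ref{boundchi}.
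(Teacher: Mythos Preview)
Your overall strategy matches the paper's: separate the ``free'' positions in residue $(u_1)_3$ (which are small, lying in $[u_1+3,u_1+6\ga-3]$) from the elements of residue $3-(u_1)_3$ (which are large), and use the fact that ${\rm T}$ trades $k$ of the latter for $k$ of the former. However, two of your steps do not go through as written.

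First, the bound $\wt{I}_{{\rm T}_{(u_1)_3}}-\wt{I}_{{\rm S}_{(u_1)_3}}\leq k(u_1+6\ga-3)$ is false in general: since $F_{\rm S}\not\subset F_{\rm T}$ is possible, the left-hand side equals $\sum F_{\rm T}-\sum F_{\rm S}$ with both sets inside an interval of length $6\ga-6$, and one easily constructs examples (e.g.\ $k=1$, $F_{\rm T}$ the two largest free positions, $F_{\rm S}$ the smallest) where the difference exceeds $u_1+6\ga-3$. The correct statement carries an additive $O(\ga^2)$ error, which is harmless for the conclusion but must be tracked.

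Second, and more seriously, your argument for the lower bound in residue $3-(u_1)_3$ does not work. The observation ``$u_1+v\in{\rm T}_0$'' imposes no constraint on $v$: the sum $u_1+v$ is automatically a residue-$0$ semigroup element, and when it exceeds $2g$ it lies past the conductor anyway. The paper's argument is different and direct: if some $u\in{\rm T}_{3-(u_1)_3}$ satisfied $u<2g-1-(6\ga+\chi)$, then the translates $u+n_1,\dots,u+n_\ga,u+6\ga+3,\dots,u+6\ga+\chi+3$ would all lie in ${\rm T}_{3-(u_1)_3}\cap[2g]$, forcing $\#{\rm T}_{3-(u_1)_3}>\ga+\lceil\chi/3\rceil$ and contradicting Lemma~\ref{T2_bound}. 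The same bound holds verbatim for ${\rm S}$. Once you know \emph{every} element of ${\rm S}_{3-(u_1)_3}$ and ${\rm T}_{3-(u_1)_3}$ exceeds $2g-1-(6\ga+\chi)$, and every free position is at most $u_1+6\ga-3$, the separation $(2g-1-(6\ga+\chi))-(u_1+6\ga-3)=g-9\ga-2\chi+1>0$ for $g\gg\ga$ lets you conclude without any case analysis: the two ranges are disjoint and ordered, so moving $k$ elements from the upper range to the lower one strictly decreases $\wt{I}$ (up to the harmless $O(\ga^2)$ rearrangement terms). Your anticipated ``case analysis on the overlap'' is unnecessary.
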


\begin{proof}
Begin by noting that $u \geq 2g-1-(6\ga+\chi)$ for all $u \in {\rm T}_{3-(u_1)_3}$. Indeed, the presence of an element $u \leq 2g-1-(6\ga+\chi)-3$ in ${\rm T}_{3-(u_1)_3}$ would force 
\[
\{u+n_1,u+n_2,\dots,u+6\ga\}\sqcup\{u+6\ga +3,u+6\ga +6,\dots,u+6\ga+\chi+3\} \subset {\rm T}_{3-(u_1)_3}
\]
where $n_i$, $i \geq 1$ denotes the $i$th nonzero element of ${\rm T}$. But then $\# {\rm T}_{3-(u_1)_3}\geq \ga+ \frac{\chi}{3}+1$, which contradicts Lemma~\ref{T2_bound} above.

\medskip
On the other hand, we have $\{u_1+6\ga+3k: k \geq 0\} \cap [2g] \subset u_1+\ov{\rm T}_0$, by construction. It follows that $u\leq u_1+6\ga-3$ for all $u \in {\rm T}_{(u_1)}\setminus(u_1+\ov{\rm T}_0)$.
Note that, since $\chi<6\ga$, the difference
\[
(2g-1-(6\ga+\chi))-(u_1+6\ga-3)=g-9\ga-2\chi+1
\]
is positive whenever $g$ is large relative to $\ga$; it follows that the smallest element of ${\rm T}_{3-(u_1)_3}$ is larger than the largest element of ${\rm T}_{(u_1)}\setminus (u_1+\ov{\rm T}_0)$, and we conclude immediately. 
\end{proof}
The upshot of Proposition~\ref{maxT1} is that it suffices to consider semigroups ${\rm T} \in {\rm H}_{u_1}$ for which ${\rm T}_{(u_1)_3}$ is of maximal size.

\subsection{Step 1, bis} We now compare the inflection of two semigroups in ${\rm H}^{{\rm S}_0}$ associated with distinct values of $u_1$.
\begin{lemma}\label{T1contained}
Assume $g \gg \ga$. Let ${\rm S}$ and ${\rm T}$ be the semigroups such that
$\rm{S}_{(u_1)_3}$ and $\rm{T}_{(u^{\pr}_1)_3}$ are of maximal size in ${\rm H}_{u_1}$ and ${\rm H}_{u^{\prime}_1}$, respectively. If $\chi:=u_1-u_1^H<6\ga$ and ${\rm  S}_{(u_1)_3}\subsetneq{\rm T}_{(u^{\pr}_1)_3}$, then $\wt{I}_T < \wt{I}_S$.
\end{lemma}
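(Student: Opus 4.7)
\medskip
\noindent \emph{Proof proposal.} The plan is to estimate $\wt{I}_T - \wt{I}_S$ via a high-vs-low dichotomy: the elements by which ${\rm T}_r$ exceeds ${\rm S}_r$ are ``small'' (of order $u_1$), while the elements by which ${\rm S}_{3-r}$ exceeds ${\rm T}_{3-r}$ are ``large'' (of order $2g$), so the latter dominates when $g \gg \ga$. Setting $r := (u_1)_3$, the inclusion ${\rm S}_r \subsetneq {\rm T}_r$ forces $(u^{\pr}_1)_3 = r$ and $u^{\pr}_1 \leq u_1 - 3$, whence $\chi^{\pr} := u^{\pr}_1 - u_1^H$ satisfies $\chi^{\pr} < \chi < 6\ga$. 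Because ${\rm S}_0 = {\rm T}_0$, only the residue classes $r$ and $3-r$ contribute to $\wt{I}_T - \wt{I}_S$; introducing $A := {\rm T}_r \setminus {\rm S}_r$, $D := {\rm S}_{3-r} \setminus {\rm T}_{3-r}$, $C := {\rm T}_{3-r} \setminus {\rm S}_{3-r}$, and $k := |A| = |D| - |C| \geq 1$, we have
\[
\wt{I}_T - \wt{I}_S = \sum_{a \in A} a + \sum_{c \in C} c - \sum_{d \in D} d.
\]

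\medskip
\noindent The first step is the structural identification ${\rm S}_r = {\rm T}_r \cap [u_1, 2g]$, which yields $A \subset [u^{\pr}_1, u_1 - 3]$ and consequently $\sum_{a \in A} a \leq k\, u_1$. Indeed, ${\rm T}_r \cap [u_1, 2g]$ is itself a valid candidate for ${\rm S}_r$ in ${\rm H}_{u_1}$ (it pairs with the prescribed ${\rm S}_0$ and minimal non-$3$-multiple insertion $u_1$), so the maximality of ${\rm S}_r$ forces equality. The second step re-invokes the argument from the proof of Proposition~\ref{maxT1}: every element of ${\rm S}_{3-r}$ lies in $[2g - 1 - (6\ga + \chi),\, 2g]$, while trivially $c \leq 2g$ for $c \in C$. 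Combined with Lemma~\ref{T2_bound} (which gives $|D| \leq |{\rm S}_{3-r}| \leq \ga + \lceil \chi/3 \rceil \leq 2\ga + 1$), this yields
\[
\sum_{d \in D} d - \sum_{c \in C} c \geq 2g(|D| - |C|) - |D|(6\ga + \chi + 1) \geq 2gk - (2\ga + 1)(6\ga + \chi + 1).
\]

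\medskip
\noindent Putting the two estimates together and using $u_1 \leq u_1^H + 6\ga \leq g + 3\ga + 1$, we obtain
\[
\wt{I}_T - \wt{I}_S \leq k\, u_1 - 2gk + (2\ga + 1)(6\ga + \chi + 1) = -k(2g - u_1) + O(\ga^2);
\]
since $2g - u_1 \geq g - 3\ga - 1$ and $k \geq 1$, the right-hand side is strictly negative whenever $g \gg \ga$. The main obstacle in the argument is the structural identification ${\rm S}_r = {\rm T}_r \cap [u_1, 2g]$, which underpins the claim that $A$ consists only of small elements; once it is in place, the quantitative estimates parallel those in the proof of Proposition~\ref{maxT1}.
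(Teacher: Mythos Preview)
Your overall strategy---the high-vs-low dichotomy pitting the ``small'' excess $A = {\rm T}_r \setminus {\rm S}_r$ against the ``large'' residue-$(3-r)$ discrepancy---is exactly the paper's, and Step~2 together with the final assembly is essentially correct (modulo a harmless slip: Lemma~\ref{T2_bound} with $\chi < 6\ga$ gives $|{\rm S}_{3-r}| \leq 3\ga$, not $2\ga+1$).

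The gap is in Step~1. Your structural identification ${\rm S}_r = {\rm T}_r \cap [u_1, 2g]$ rests on the assertion that ${\rm T}_r \cap [u_1, 2g]$ is the $r$-residue part of some semigroup in ${\rm H}_{u_1}$. But you only note that it has the right minimal element and sits over the prescribed ${\rm S}_0$; you never verify the semigroup closure. Concretely, one would have to exhibit a set ${\rm S}^{\pr}_{3-r}$ of cardinality $g - |{\rm S}_0| - |{\rm T}_r \cap [u_1,2g]|$ that contains $\big(({\rm T}_r \cap [u_1,2g]) + ({\rm T}_r \cap [u_1,2g])\big) \cap [2g]$ and is stable under addition by ${\rm S}_0$, and nothing you wrote rules out the possibility that no such set exists. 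Without this, maximality of $|{\rm S}_r|$ gives you nothing, and the inclusion $A \subset [u^{\pr}_1, u_1 - 3]$ remains unproved.

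The fix is easy and is precisely what the paper does. Since $u_1 \in {\rm S}_r \subset {\rm T}_r$ and $\{6\ga + 3k : k \geq 0\} \cap [2g] \subset {\rm S}_0 = {\rm T}_0$, the arithmetic progression $\{u_1 + 6\ga + 3k : k \geq 0\} \cap [2g]$ lies in ${\rm S}_r \cap {\rm T}_r$; hence every $a \in A$ satisfies $a \leq u_1 + 6\ga - 3$. This replaces your $\sum_A a \leq k\,u_1$ by $\sum_A a \leq k(u_1 + 6\ga)$, and your final line becomes $\wt{I}_T - \wt{I}_S \leq -k(2g - u_1 - 6\ga) + O(\ga^2)$ with $2g - u_1 - 6\ga \geq g - 9\ga - 1$, so the conclusion still goes through for $g \gg \ga$.
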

\begin{proof}
By construction, we have $\{u_1+6\ga+3k|k\geq 0\}\subset {\rm T}_{(u^{\pr}_1)_3}\cap{\rm S}_{(u_1)_3}$. Thus, every $u\in {\rm T}_{(u^{\pr}_1)_3}\setminus{\rm S}_{(u_1)_3}$ satisfies $u\leq u_1+6\ga-3$.
On the other hand, as we saw in the proof of Lemma~\ref{maxT1}, every element of $\rm{S}_{3-(u_1)_3}$ is greater than or equal to $2g-1-(6\ga+\chi)$.
Moreover, as $\chi<6\ga$ it follows that the difference
\[
(2g-1-(6\ga+\chi))-(u_1+6\ga-3)=g-9\ga-2\chi+1
\]
is positive whenever $g$ is large relative to $\ga$. 
It follows that the smallest element of ${\rm S}_{3-(u_1)_3}$ is larger than the largest element of ${\rm T}_{(u^\pr_1)_3}\setminus {\rm S}_{(u_1)_3}$, and we conclude immediately.
\end{proof}
\begin{lemma}\label{T1form}
Assume that $g\gg\ga$ and let ${\rm T}$ 
be a $(3,\ga)$-hyperelliptic semigroup for which ${\rm T}_{(u_1)_3}=(u_1+3\mathbb{N})\cap[2g]$. Then $u_1=u_1({\rm T})\geq g-\ga+1$; moreover, whenever $u_1=g-\ga+1+\lfloor\frac{(g-\ga)_3}{2}\rfloor$, the cardinality of the residue set ${\rm T}_{(u_1)_3}$ is maximal among {\it all} $(3,\ga)$-hyperelliptic semigroups in ${\rm H}={\rm H}(g,\ga)$.
\end{lemma}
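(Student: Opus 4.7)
The plan rests on three elementary observations about any $(3,\ga)$-hyperelliptic semigroup ${\rm T}$ of genus $g$. First, the multiples of $3$ in ${\rm T}$ form $3\cdot S'$ for some numerical semigroup $S'\subset\mathbb{N}$ of genus exactly $\ga$ (forced by $n_{\ga}=6\ga$ together with $(2\ga+1)\cdot 3\in{\rm T}$); counting elements of $S'$ in $[1,\lfloor 2g/3\rfloor]$ for $g$ large yields $\#{\rm T}_0=\lfloor 2g/3\rfloor-\ga$. Second, the full-AP hypothesis fixes $\#{\rm T}_{(u_1)_3}=\lfloor(2g-u_1)/3\rfloor+1$. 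Third, because $2u_1,2u_1+3,\ldots$ all lie in ${\rm T}$ by semigroup closure of ${\rm T}_{(u_1)_3}$ with itself, $\#{\rm T}_{3-(u_1)_3}\geq\lfloor(2g-2u_1)/3\rfloor+1$ whenever $2u_1\leq 2g$; the alternative $2u_1>2g$ forces $u_1>g$, which is stronger than what we need.

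To prove the lower bound $u_1\geq g-\ga+1$, substitute the three counts above into the identity $\#{\rm T}^{\ast}=g$ to obtain
\[
\lfloor(2g-u_1)/3\rfloor+\lfloor(2g-2u_1)/3\rfloor\leq\lceil g/3\rceil+\ga-2.
\]
Writing $g=3m+(g)_3$ and $u_1=3n+(u_1)_3$ reduces this to a purely modular relation; a case analysis over the six residue pairs $((g)_3,(u_1)_3)\in\{0,1,2\}\times\{1,2\}$ then yields $u_1\geq g-\ga+1$ uniformly.

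For the maximality assertion, set $u_1^{\ast}:=g-\ga+1+\lfloor(g-\ga)_3/2\rfloor$ (the smallest integer at least $g-\ga+1$ with residue in $\{1,2\}$ modulo $3$) and $N^{\ast}:=\lfloor(2g-u_1^{\ast})/3\rfloor+1$, so that $\#{\rm T}_{(u_1^{\ast})_3}=N^{\ast}$ under the full-AP hypothesis. Given any ${\rm T}'\in{\rm H}(g,\ga)$ with $u_1':=u_1({\rm T}')$, two upper bounds on $\#{\rm T}'_{(u_1')_3}$ apply without assuming full-AP structure on ${\rm T}'$: the ``ambient AP'' bound $\lfloor(2g-u_1')/3\rfloor+1$, non-increasing in $u_1'$, and the ``total count plus semigroup closure'' bound $\lceil g/3\rceil+\ga-\lfloor(2g-2u_1')/3\rfloor-1$, non-decreasing in $u_1'$. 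The first governs the regime $u_1'\geq u_1^{\ast}$, where it is already at most $N^{\ast}$; the second governs $u_1'<u_1^{\ast}$, where the same mod-$3$ bookkeeping from the first part shows it too is bounded by $N^{\ast}$.

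The main obstacle will be the modular bookkeeping, particularly in the subcase $(g-\ga)_3=2$: here $u_1^{\ast}=g-\ga+2$ exceeds the formal lower bound $g-\ga+1$ by one, so the two universal upper bounds above do not meet exactly at $u_1^{\ast}$. The slack this introduces must be absorbed by the jump of at least one in $\lfloor(2g-2u_1')/3\rfloor$ as $u_1'$ drops past $u_1^{\ast}$, skipping the multiple of $3$ at $u_1^{\ast}-1=g-\ga+1$.
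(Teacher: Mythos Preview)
Your approach to the first assertion ($u_1 \geq g-\ga+1$) matches the paper's: count $\#{\rm T}_0$, compute $\#{\rm T}_{(u_1)_3}$ from the full-AP hypothesis, bound $\#{\rm T}_{3-(u_1)_3}$ from below via the sumset inclusion, and impose $\#{\rm T}^{\ast}=g$.

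The maximality argument, however, has a genuine gap. Your ``total count plus semigroup closure'' bound
\[
\#{\rm T}'_{(u_1')_3} \leq \lceil g/3\rceil+\ga-\lfloor(2g-2u_1')/3\rfloor-1
\]
is equivalent to $\#{\rm T}'_{3-(u_1')_3}\geq\lfloor(2g-2u_1')/3\rfloor+1$, i.e.\ to the assertion that \emph{every} integer in $[2u_1',2g]$ of residue $3-(u_1')_3$ lies in ${\rm T}'$. But this is precisely what the full-AP hypothesis on ${\rm T}'_{(u_1')_3}$ would give you, and you have explicitly dropped that hypothesis for ${\rm T}'$. For a general ${\rm T}'$ one only knows ${\rm T}'_{3-(u_1')_3}\supset ({\rm T}'_{(u_1')_3}+{\rm T}'_{(u_1')_3})\cap[2g]$, and this sumset need not fill the full arithmetic progression from $2u_1'$. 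Concretely, at $u_1'=u_1^{\ast}-3$ your bound would force $\#{\rm T}'_{(u_1')_3}\leq N^{\ast}-2$, yet for suitable ${\rm T}'_0$ one can build valid semigroups with $\#{\rm T}'_{(u_1')_3}=N^{\ast}-1$ there; so the bound is simply false as stated, not merely awkward to verify.

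The paper handles this with a different idea: a general additive-combinatorics lemma asserting that if $S\cap[g]$ has $k$ elements then $\#(S+S)\cap[2g]\geq 2k-1$. Assuming for contradiction that $\#{\rm S}_{(u_1')_3}\geq N^{\ast}+1$, the trivial bound $\#({\rm S}_{(u_1')_3}\cap\{g+1,\dots,2g\})\leq\lfloor(g+2)/3\rfloor$ forces $\#({\rm S}_{(u_1')_3}\cap[g])\geq\lfloor\ga/3\rfloor+1$ (or $+2$ when $(g-\ga)_3=2$), whence the sumset lemma gives $\#({\rm S}_{(u_1')_3}+{\rm S}_{(u_1')_3})\cap[2g]\geq 2\lfloor\ga/3\rfloor+1$; summing this with $\#{\rm S}_0$ and $\#{\rm S}_{(u_1')_3}$ then exceeds $g$. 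The sumset lower bound, not the modular bookkeeping you anticipate, is the missing ingredient.
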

\begin{proof} 
Let ${\rm T}$ denote a semigroup for which ${\rm T}_{(u_1)_3}=(u_1+3\mathbb{N})\cap[2g]$. 
For such a semigroup, we have
\[
\begin{split}
\#{\rm T}_0&= \frac{2g}{3}- \frac{(2g)_3}{3} -\gamma, \# {\rm T}_{(u_1)_3}= \frac{2g-u_1}{3}+ 1- \frac{(2g-u_1)_3}{3}, \text{ and also} \\
\#{\rm T}_{3-(u_1)_3}&\geq\#({\rm T}_{(u_1)_3}+{\rm T}_{(u_1)_3})\cap[2g]= \frac{2g-2u_1}{3}+ 1- \frac{(2g-2u_1)_3}{3}
\end{split}
\]
because 
$({\rm T}_{(u_1)_3}+{\rm T}_{(u_1)_3})\cap[2g]\subset{\rm T}_{3-(u_1)_3}$
. It follows that
\[
\begin{split}
g&= \# {\rm T}_0+ \# {\rm T}_{(u_1)_3}+ \# {\rm T}_{3-(u_1)_3} \\
&\geq\frac{2g}{3}- \frac{(2g)_3}{3} -\ga+ \frac{2g-u_1}{3}+ 1- \frac{(2g-u_1)_3}{3}+ \frac{2g-2u_1}{3}+ 1- \frac{(2g-2u_1)_3}{3}.
\end{split}
\]
In other words, we have
\[
u_1\geq g -\ga +2 - \frac{(2g)_3+(2g-u_1)_3+(2g-2u_1)_3}{3}\geq g-\ga+1
\]
in which the second inequality follows from the fact that $u_1$ has nonzero residue modulo 3.

{\fl Note} that whenever $(g-\ga)_3=2$, the preceding inequality may be strengthened to 
$u_1\geq g-\ga+2$, since in that case $g-\ga+2$ is the smallest number with nonzero 3-residue greater than or equal to $g-\ga+1$. As
$\#{\rm T}_{(u_1)_3}= \lfloor\frac{2g-u_1}{3}\rfloor+1$, we obtain  
\begin{equation}\label{t1_equal}
\#{\rm T}_{(u_1)_3}=\bigg \lfloor\frac{g+\ga+2}{3} \bigg \rfloor \text{ if } (g-\ga)_3\neq2, \text { and }
\#{\rm T}_{(u_1)_3}=\bigg \lfloor\frac{g+\ga+1}{3} \bigg \rfloor \text{ otherwise}. 
\end{equation}
In the former (resp., latter) case, we have 
$\#({\rm T}_{(u_1)_3}+{\rm T}_{(u_1)_3})\cap[2g]=\#{\rm T}_{3-(u_1)_3}$ (resp., $\#({\rm T}_{(u_1)_3}+{\rm T}_{(u_1)_3})\cap[2g]=\#{\rm T}_{3-(u_1)_3}-1$); in particular,
\begin{equation}\label{boundT1T1}
\#({\rm T}_{(u_1)_3}+T_{(u_1)_3})\cap[2g]\geq \#{\rm T}_{3-(u_1)_3}-1.
\end{equation}
{\fl To prove} the second part of the lemma we will apply the following observation.

\begin{claim}\label{aux}
Let $g$ be an integer and let $S=\{x_1,x_2,\dots\}$ be an increasing 
sequence of nonnegative integers for which $S\cap[g]=\{x_1,x_2,\dots,x_k\}$ for some $k \geq 1$. 
Then $\#(S+S)\cap[2g]\geq2k-1$.
\end{claim}

\begin{proof}[Proof of Claim~\ref{aux}]
Let $x_1:=x$ and let $x_{i+1}:=x+a_i$ for $i=1,\cdots,k-1$. The $2k-1$ distinct elements $2x,2x+a_1,2x+a_2,\cdots,2x+a_{k-1},2x+a_{k-1}+a_1,2x+a_{k-1}+a_2,\cdots,2x+2a_{k-1}$ all belong to $\#(S+S)\cap[2g]$.
\end{proof}
{\fl Now} suppose there exists a semigroup ${\rm S}\in {\rm H}^{{\rm T}_0}$ for which $\#{\rm S}_{(u^{\pr}_1)_3}\geq\#{\rm T}_{(u_1)_3}+1$, where $u_1^{\pr}:=u_1({\rm S})$.
Then \eqref{t1_equal} implies that
\begin{equation}\label{s1_ineq}
\#{\rm S}_{(u^{\pr}_1)_3}\geq \bigg \lfloor\frac{g+\ga+2}{3} \bigg \rfloor+1 \text{ if } (g-\ga)_3 \neq2, \text{ and }
\#{\rm S}_{(u^{\pr}_1)_3}\geq \bigg \lfloor\frac{g+\ga+1}{3} \bigg \rfloor+1 \text{ otherwise}. 
\end{equation}
Note that
\[
{\rm S}_{(u^{\pr}_1)_3}=({\rm S}_{(u^{\pr}_1)_3}\cap[g])\sqcup({\rm S}_{(u^{\pr}_1)_3}\cap\{g+1,\dots,2g\})
\]
{\fl and} $\#{\rm S}_{(u^{\pr}_1)_3}\cap\{g+1,g+2,\cdots,2g\}\leq\lfloor\frac{2g-(g+1)}{3}\rfloor+1=\lfloor\frac{g+2}{3}\rfloor$. Applying \eqref{s1_ineq}, we deduce that
\begin{equation}\label{s1_in_g}
\begin{split}
\#{\rm S}_{(u^{\pr}_1)_3}\cap[g]&\geq \bigg \lfloor\frac{g+\ga+2}{3} \bigg \rfloor+1-\bigg \lfloor\frac{g+2}{3} \bigg \rfloor=\bigg \lfloor\frac{\ga}{3}\bigg \rfloor+1
\text{ if } (g-\ga)_3 \neq2, \text { and }\\
\#{\rm S}_{(u^{\pr}_1)_3}\cap[g]&\geq \bigg \lfloor\frac{g+\ga+1}{3} \bigg \rfloor+1- \bigg \lfloor\frac{g+2}{3} \bigg \rfloor=\bigg \lfloor\frac{\ga}{3} \bigg \rfloor+2 \text{ otherwise}. 
\end{split}
\end{equation}

{\fl Applying} Claim~\ref{aux} in tandem with the inequalities \eqref{s1_in_g} now yields
{\small
\begin{equation}\label{s1s1_ineq}
\#{({\rm S}_{(u_1^{\pr})_3}+{\rm S}_{(u_1^{\pr})_3})\cap[2g]}\geq2 \bigg \lfloor\frac{\ga}{3} \bigg \rfloor+1 \text{ if } g-\ga\neq2, \text { and }
\#{({\rm S}_{(u_1^{\pr})_3}+{\rm S}_{(u_1^{\pr})_3})\cap[2g]}\geq2 \bigg \lfloor\frac{\ga}{3} \bigg \rfloor+3 \text{ otherwise}. 
\end{equation}
}

{\fl Using} the fact that $\#{\rm S}_0=\lfloor\frac{2g}{3}\rfloor-\ga$ and summing (this together with) the inequalities \eqref{s1_ineq} and \eqref{s1s1_ineq}, we deduce that 
{\small
\[
\begin{split}
\#{\rm S}_0+\#{\rm S}_{u^{\pr}_1}+\#{({\rm S}_{(u_1^{\pr})_3}+{\rm S}_{(u_1^{\pr})_3})\cap[2g]} \geq& \bigg \lfloor\frac{2g}{3} \bigg \rfloor-\ga+ \bigg \lfloor\frac{g+\ga+2}{3} \bigg \rfloor+1+ 2 \bigg \lfloor\frac{\ga}{3} \bigg \rfloor+1\geq g+1 \text{ if } (g-\ga)_3\neq2, \text { and }\\
\#{\rm S}_0+\#{\rm S}_{u^{\pr}_1}+\#{({\rm S}_{(u_1^{\pr})_3}+{\rm S}_{(u_1^{\pr})_3})\cap[2g]} \geq& \bigg \lfloor\frac{2g}{3}\bigg \rfloor-\ga+ \bigg \lfloor\frac{g+\ga+1}{3} \bigg \rfloor+1 2 \bigg \lfloor\frac{\ga}{3} \bigg \rfloor+3\geq g+1 \text{ otherwise}. 
\end{split}
\]
}
Both conclusions are absurd, as they contradict the fact that ${\rm S}$ is of genus $g$.
\end{proof}

\begin{rem}\label{completeT2}
When there is equality in \eqref{boundT1T1}, so that $\#({\rm T}_{(u_1)_3}+T_{(u_1)_3})\cap[2g]=\#{\rm T}_{3-(u_1)_3}-1$, the smallest possible element that may be inserted to $(T_{(u_1)_3}+ T_{(u_1)_3})$ to produce ${\rm T}_{3-(u_1)_3}$ is $2u_1-n_1$, where $n_1$ is the smallest positive element of ${\rm T}$; in particular, if ${\rm T}$ is of maximal weight the insertion is necessarily $2u_1-n_1$. In particular, when referring to the semigroup ${\rm T}$ of Lemma~\ref{T1form} above, hereafter we may suppose that ${\rm T}_{3-(u_1)_3}=({\rm T}_{(u_1)_3}+T_{(u_1)_3})\cap[2g]\sqcup\{ 2u_1-n_1 \}$. 
\end{rem}

\begin{prop}\label{maxT1v2}
Assume that $g \gg \ga$. Let $u_1=g-\ga+1+\lfloor\frac{(g-\ga)_3}{2}\rfloor$ and let ${\rm T}$ be a semigroup for which ${\rm T}_{(u_1)_3}=(u_1+3\mathbb{N})\cap[2g]$. Any semigroup ${\rm S}$ for which 
$\#{\rm S}_{(u^{\pr}_1)_3}<\#{\rm T}_{(u_1)_3}$ satisfies $\wt{I}_T< \wt{I}_S$, where $u^{\pr}_1=u_1({\rm S})$. \end{prop}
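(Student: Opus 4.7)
The plan is to compare $\wt{I}_{\rm T}$ and $\wt{I}_{\rm S}$ directly, after reducing to the case where ${\rm S}$ is as ``packed'' as possible within its own class ${\rm H}_{u_1^\pr}$. First, by Proposition~\ref{maxT1}, we may replace ${\rm S}$ with the semigroup in ${\rm H}_{u_1^\pr}$ whose middle residue class ${\rm S}_{(u_1^\pr)_3}$ has maximal cardinality, without increasing the inflection; if ${\rm T}$ beats this replacement, it beats the original ${\rm S}$ a fortiori. So we may assume ${\rm S}_{(u_1^\pr)_3}$ itself is of maximal cardinality in ${\rm H}_{u_1^\pr}$, which by the structural analysis of Lemma~\ref{T1form} and Remark~\ref{completeT2} allows us to take ${\rm S}_{(u_1^\pr)_3}=(u_1^\pr+3\mathbb{N})\cap[2g]$ and ${\rm S}_{3-(u_1^\pr)_3}=({\rm S}_{(u_1^\pr)_3}+{\rm S}_{(u_1^\pr)_3})\cap[2g]\sqcup\{2u_1^\pr-n_1\}$.

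Next, writing $t_1:=\#{\rm T}_{(u_1)_3}$, $t_2:=\#{\rm T}_{3-(u_1)_3}$ and $s_1,s_2$ analogously for ${\rm S}$, the hypothesis $s_1<t_1$ together with $\#{\rm T}_0=\#{\rm S}_0$ forces $s_2-t_2=t_1-s_1>0$. I would write each inflection as a sum over the arithmetic progressions appearing in the two semigroups using standard triangular-number identities, mirroring the bookkeeping carried out in Lemmas~\ref{T2_bound} and~\ref{boundchi}. The decisive observation is that the $s_2-t_2$ extra cross-residue elements of ${\rm S}$ (relative to ${\rm T}$) satisfy the lower bound $\ge 2g-1-(6\ga+\chi^\pr)$ established in the proof of Lemma~\ref{T1contained}, while the $t_1-s_1$ extra middle-residue elements of ${\rm T}$ sit at the top of the AP $\{u_1,u_1+3,\dots\}\cap[2g]$, each of size at most $2g-O(1)$ but on average near $u_1+\tfrac{3}{2}(t_1+s_1)$. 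Since $u_1\le g+O(\ga)$ whereas $2g-1-(6\ga+\chi^\pr)\ge 2g-O(\ga)$, each unit of transfer from the middle to the cross class costs ${\rm S}$ an additional amount of order $g-O(\ga)$ in inflection, giving $\wt{I}_{\rm T}<\wt{I}_{\rm S}$ whenever $g\gg\ga$.

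The main obstacle will be handling the case $(u_1^\pr)_3\ne(u_1)_3$, where ${\rm S}$'s middle residue class sits in a different coset modulo $3$ than ${\rm T}$'s, so Lemma~\ref{T1contained} does not apply directly and I must compare APs in distinct residue classes modulo $3$. This is compounded by the possible insertion $2u_1^\pr-n_1$ in ${\rm S}_{3-(u_1^\pr)_3}$, its analog in ${\rm T}_{3-(u_1)_3}$, and the trichotomy $(g-\ga)_3\in\{0,1,2\}$ which shifts $u_1$ by $\lfloor(g-\ga)_3/2\rfloor$ and perturbs cardinalities via \eqref{t1_equal}. I expect to consolidate these subcases into a single bookkeeping inequality of the form $(t_1-s_1)(g-O(\ga))>C\ga^2$ for an explicit constant $C$, with positivity ensured by the hypothesis $g\gg\ga$.
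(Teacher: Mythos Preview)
Your overall intuition that the dominant discrepancy is of order $(t_1-s_1)\cdot g$ is correct and matches the paper's approach. But the specific reduction you propose contains a genuine error.

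You claim that after passing to the cardinality-maximizing representative of ${\rm H}_{u_1^{\pr}}$ via Proposition~\ref{maxT1}, Lemma~\ref{T1form} and Remark~\ref{completeT2} let you take ${\rm S}_{(u_1^{\pr})_3}=(u_1^{\pr}+3\mathbb{N})\cap[2g]$. This is backwards. Lemma~\ref{T1form} says that \emph{if} ${\rm S}_{(u_1^{\pr})_3}$ is a full arithmetic progression then $u_1^{\pr}\geq g-\ga+1$; it does \emph{not} say the maximal residue class in ${\rm H}_{u_1^{\pr}}$ is a full AP. In the relevant range $u_1^H\leq u_1^{\pr}<u_1$ (recall $u_1^H\approx g-3\ga$), the full AP $(u_1^{\pr}+3\mathbb{N})\cap[2g]$ is simply too large to fit into a genus-$g$ semigroup, so the maximal ${\rm S}_{(u_1^{\pr})_3}$ necessarily has gaps. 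Your ensuing computation, which treats ${\rm S}_{(u_1^{\pr})_3}$ and ${\rm S}_{3-(u_1^{\pr})_3}$ as shifted full APs, therefore does not apply. (There is also a secondary issue: Proposition~\ref{maxT1} is stated only for $\chi<6\ga$, so your reduction step is not justified for arbitrary ${\rm S}$ without first disposing of large $\chi^{\pr}$.)

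The paper avoids this trap by \emph{not} reducing to a full AP. In Case~1 it strips off only the guaranteed common tail $(u_1^{\pr}+6\ga+3\mathbb{N})\cap[2g]$, leaving remainders ${\rm S}_1^{\pr},{\rm T}_1^{\pr}$ of cardinality at most $2\ga$; the key inequality \eqref{case_1_ineq} then isolates the term $(s_2-t_2)g$ while all other terms are $O(\ga^2)$. For the residue-mismatch case $(u_1^{\pr})_3\neq(u_1)_3$, which you correctly flag as the hard part, the paper introduces an auxiliary semigroup ${\rm V}$ with $u_1^{\pr\pr}=u_1+(u_1)_3$ sitting in the same residue class as ${\rm S}$, proves $\wt{I}_{\rm V}-\wt{I}_{\rm T}$ is controlled via \eqref{last_estimate2}, and then compares ${\rm S}$ to ${\rm V}$ by the Case~1 mechanism. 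Your proposal offers no substitute for this intermediate-semigroup trick.
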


\begin{proof}
As ${\rm T}_0={\rm S}_0$, it suffices to show that
\[
(\wt{I}_{S_{(u^{\pr}_1)_3}}-\wt{I}_{T_{(u_{1})_3}})+(\wt{I}_{S_{3-(u^{\pr}_1)_3}}-\wt{I}_{T_{3-(u_1)_3}})> 0.
\]
Accordingly, let $s_0=\#{\rm S_0}$, $t_0=\#{\rm T}_0$, $s_1=\#\rm S_{(u^{\pr}_1)_3}$, $t_1=\#\rm T_{(u_1)_3}$, $s_2=\#\rm S_{3-(u^{\pr}_1)_3}$ and $t_2=\#\rm T_{3-(u_1)_3}$.

From our initial hypotheses we have $u_1\leq g-\ga+2$; it follows that
\begin{equation}\label{T_inequalities}
\wt{I}_{T_{(u_1)_3}}\leq   t_1(g-\ga+2)+3\sum_{i=1}^{t_1-1}i \text{ and }
\wt{I}_{T_{3-(u_1)_3}}\leq  t_2(2g-2\ga+4) +3\sum_{i=1}^{t_2-1}i.
\end{equation}
Indeed, the first inequality in \eqref{T_inequalities} is clear, while 
the second inequality is clear whenever $\#({T_{(u_1)_3}}+{T_{(u_1)_3}})\cap[2g]=t_2$. 
On other hand, whenever $\#({T_{(u_1)_3}}+{T_{(u_1)_3}})\cap[2g]=t_2-1$, $({T_{(u_1)_3}}+{T_{(u_1)_3}})\cap[2g]$ is an arithmetic sequence with smallest element $2u_1$ that contains every element in ${\rm T}$ with residue equal to $(2u_1)_3$ between $2u_1$ and $2g$. Consequently, any element that is added to $({T_{(u_1)_3}}+{T_{(u_1)_3}})\cap[2g]$ in order to yield ${T_{3-(u_1)_3}}$ is necessarily less than $2u_1$. The right-hand side of the second inequality thus computes the sum of $t_2$ terms of an arithmetic sequence starting from $2g-2\ga+4$, of which the first $t_2-1$ terms are greater than or equal to (distinct) elements in $({T_{(u_1)_3}}+{T_{(u_1)_3}})\cap[2g]$ while the last is larger than any element that we may add to $({T_{(u_1)_3}}+{T_{(u_1)_3}})\cap[2g]$ in order to yield ${T_{3-(u_1)_3}}$. In particular, the second inequality in \eqref{T_inequalities} is strict whenever $\#({T_{(u_1)_3}}+{T_{(u_1)_3}})\cap[2g]=t_2-1$.


{\bf \flushleft {Case 1:}} Suppose that $(u_1^{\pr})_3=(u_1)_3$. If $u_1^{\pr}\geq u_1$, then ${\rm S}_{(u^{\pr}_1)_3}\subset{\rm T}_{(u_1)_3}$, and the result follows from Lemma~\ref{T1contained}. Thus without loss of generality we may assume that $u_1^{\pr}< u_1$. Note that $(u_1^{\pr}+6\ga+3\mathbb{N})\cap[2g]\subset {\rm S}_{(u^{\pr}_1)_3}\cap{\rm T}_{(u_1)_3}$. We set 
\[
{\rm T}_{1}^{\pr}={\rm T}_{(u_1)_3}\setminus((u_1^{\pr}+6\ga+3\mathbb{N})\cap[2g]) \text{ and } 
{\rm S}_{1}^{\pr}={\rm S}_{(u^{\pr}_1)_3}\setminus ((u_1^{\pr}+6\ga+3\mathbb{N})\cap[2g])
\]
and we let $t^{\pr}_1=\#{\rm T}_{1}^{\pr}$ and $s^{\pr}_1=\#{\rm S}_{1}^{\pr}
$ denote their respective cardinalities.
By construction, we have  $t_1^{\pr}-s^{\pr}_1=t_1-s_1$, 
$\wt{I}_{S_{(u^{\pr}_1)_3}}-\wt{I}_{T_{(u_{1})_3}}=\wt{I}_{S^{\pr}_{1}}-\wt{I}_{T^{\pr}_{1}}$ and it's easy to see that $\wt{I}_{T^{\pr}_{1}}\leq   t^{\pr}_1(g-\ga+2)+3\sum_{i=1}^{t^{\pr}_1-1}i$.

{\fl Remark~\ref{u1_remarks}} establishes that
$u_1^{\pr}\geq g-3\ga+1$; it follows that

\begin{equation}\label{I_S_pr}
\wt{I}_{S^{\pr}_{1}}\geq
s^{\pr}_1(g-3\ga+1)+3\sum_{i=1}^{s^{\pr}_1-1}i
\text{ and }
\wt{I}_{S_{3-(u^{\pr}_1)_3}}\geq s_2(2g-6\ga+2)+3\sum_{i=1}^{s_2-1}i.
\end{equation}

{\fl Thus}
\[
\begin{split}
\wt{I}_{S^{\pr}_{1}}-\wt{I}_{T^{\pr}_{1}}&\geq (s_1^{\pr}-t_1^{\pr})g-(3s_1^{\pr}-t_1^{\pr})\ga +s_1^{\pr}-2t_1^{\pr}
-3\sum_{i=s_1^{\pr}}^{t^{\pr}_1-1}i\text{ and}\\
\wt{I}_{S_{3-(u^{\pr}_1)_3}}-\wt{I}_{T_{3-(u_1)_3}}&\geq
2(s_2-t_2)g-2(3s_2-t_2)\ga +2(s_2-2t_2)
+3\sum_{i=t_2}^{s_2-1}i.
\end{split}
\]
Summing and bearing in mind that 
$t_1^{\pr}-s_1^{\pr}=s_2-t_2$, we find that
\begin{equation}\label{case_1_ineq}
\wt{I}_S-\wt{I}_T\geq
(s_2-t_2)g+(t_1^{\pr}+t_2)\ga+(s_1^{\pr}+2s_2)-3(s_1^{\pr}+2s_2)\ga-2(t_1^{\pr}+2t_2)+3\sum_{i=t_2}^{s_2-1}i -3\sum_{i=s_1^{\pr}}^{t^{\pr}_1-1}i.
\end{equation}

{\fl Since} $g\gg\ga$ by construction ${\rm S}_1^{\pr}$ and ${\rm T}_1^{\pr}$ are contained in $\{u_1^{\pr},u_1^{\pr}+3,...,u_1^{\pr}+6\ga-3\}$, so $s_1^{\pr}$ and $t_1^{\pr}$ are each at most $2\ga$. On the other hand, clearly $s_1^{\pr}\geq\ga$, because $\{u_1^{\pr},u_1^{\pr}+n_1,...,u_1^{\pr}+n_{\ga-1}\}$ belongs to ${\rm S}^{\pr}$ so $t_1^{\pr}-s_1^{\pr}\leq\ga$. Since $t_2=g-(t_0+t_1)\leq 2 \lfloor\frac{\ga}{3} \rfloor$ then $s_2=t_2+(t_1^{\pr}-s_1^{\pr})\leq \ga+2 \lfloor\frac{\ga}{3} \rfloor$. Thus the lower bound in \eqref{case_1_ineq} is positive whenever $g\gg \ga$.

{\bf \flushleft {Case 2:}} Suppose now that $(u_1^{\pr})_3\neq(u_1)_3$. In what follows, $r=(u_1)_3$.
Let ${\rm V}$ denote any semigroup for which $u^{\pr\pr}_1=u_1({\rm V})$ satisfies $u^{\pr\pr}_1=u_1+r$ and ${\rm V}_{(u^{\pr\pr}_1)_3}=(u^{\pr\pr}_1+3\mathbb{N})\cap[2g]$. Denote $v_0=\#{\rm V_0}$, $v_1=\#\rm S_{(u^{\pr\pr}_1)_3}$, and $v_2=\#\rm S_{3-(u^{\pr\pr}_1)_3}$. Because ${\rm V}_{(u^{\pr\pr}_1)_3}$ and ${\rm T}_{(u_1)_3}$ are arithmetic sequences in $[2g]$ in which all successive differences are 3, 
and $r$ is equal to 1 or 2, it follows that \[
\begin{split}
t_1-v_1&=
    \bigg(\frac{2g-u_1}{3}-\frac{(2g-u_1)_3}{3}+1\bigg)-\bigg(\frac{2g-(u_1+r)_3}{3}-\frac{(2g-(u_1+r))_3}{3}+1\bigg)\\
&=\frac{r}{3}-\frac{(2g-u_1)_3}{3}+\frac{(2g-(u_1+r))_3}{3}
\end{split}
\]
is equal to $0$ or $1$.
Similarly,
\[
\#({\rm T}_{(u_1)_3}+\#{\rm T}_{(u_1)_3})\cap[2g]-\#({\rm V}_{(u^{\pr\pr}_1)_3}+{\rm V}_{(u^{\pr\pr}_1)_3})\cap[2g]
\]
is equal to either $r$ or $r-1$. Just as in \eqref{boundT1T1}, we have $\#({\rm T}_{(u_1)_3}+\#{\rm T}_{(u_1)_3})\cap[2g]\geq t_2-1$, and it follows that \[
\#({\rm V}_{(u^{\pr\pr}_1)_3}+{\rm V}_{(u^{\pr\pr}_1)_3})\cap[2g]\geq t_2-r-1 .
\]

{\fl We} now compute
\[
\wt{I}_{V_{(u^{\pr\pr}_1)_3}}=v_1(u_1+r)+3\sum_{i=1}^{v_1-1}i
\text{ and }
\wt{I}_{V_{3-(u^{\pr\pr}_1)_3}}= (t_2-r-1)(2u_1+2r)+3\sum_{i=1}^{t_2-r-2}i+L
\]
in which the two first summands in the formula for $\wt{I}_{V_{3-(u^{\pr\pr}_1)_3}}$ are obtained by summing those $(t_2-r-1)$ smallest elements in $V_{3-(u^{\pr\pr}_1)_3}$ greater than or equal to (and including) $2u_1^{\pr\pr}$, and $L$ is the sum of the $\ell=v_2-(t_2-r-1)$ remaining elements $\{a_1,...,a_{\ell}\}$ of ${\rm V}_{3-(u^{\pr\pr}_1)_3}$. 

{\fl Note} that 
$a_i\geq2u_1^{\pr\pr}-n_{(l-i+1)}$ for every $i$, where $n_i$ is the $i-th$ positive element of ${\rm S}$. 
Indeed,
if $a_i<2u_1^{\pr\pr}-n_{(l-i+1)}$ for some $i$, it would follow that the $l+1$ distinct elements ${a_1,a_2,\cdots,a_i,a_i+n_1,a_i+n_{2},...,a_i+n_{l-i+1}}$ all belong to ${\rm V}_{3-(u_1^{\pr})_3}$ and are smaller than $2u_1^{\pr\pr}$, which would in turn force ${\rm V}_{3-(u_1^{\pr\pr})_3}$ to have cardinality strictly larger than $s_2$, which is absurd.

{\fl Summing} over $i$, we now obtain 
$L\geq 2u_1^{\pr\pr}(v_2-t_2+r+1)-\sum_{i=1}^{\ell} n_i$. Since $v_2-(t_2-r-1)\leq4$, it follows that
\begin{equation}\label{I_V2}
\wt{I}_{V_{3-(u^{\pr\pr}_1)_3}}\geq v_2(2u_1+2r)+3\sum_{i=1}^{t_2-r-2}i-\sum_{i=1}^{4}n_i.
\end{equation}

{\fl Therefore}
\[
\begin{split}
\wt{I}_{V_{(u^{\pr\pr}_{1})_3}}-\wt{I}_{T_{(u_1)_3}}&\geq rv_1-(t_1-v_1)u_1-3(t_1-v_1)v_1 \text{ and}\\
(\wt{I}_{V_{3-(u^{\pr\pr}_1)_3}}-\wt{I}_{T_{3-(u_1)_3}})&\geq
2rv_2+(v_2-t_2)2u_1 -3 \sum_{i=t_2-r-1}^{t_2-1}i-\sum_{i=1}^{4}n_i.
\end{split}
\]
Summing and applying the fact that 
$t_1-s_1=s_2-t_2$, we find that
\begin{equation}\label{last_estimate2}
\wt{I}_V-\wt{I}_T\geq r(v_1+2v_2)-(t_1-v_1)(3v_1-u_1)-3\sum_{i=t_2-r-1}^{t_2-1}i-\sum_{i=1}^{4}n_i.
\end{equation}
Since $r>0$ and $v_1\geq t_1-1=\lfloor\frac{g+\ga+2}{3} \rfloor-1$, the first summand on the right-hand side of \eqref{last_estimate2} is greater than 
$\lfloor\frac{g+\ga+2}{3} \rfloor$, while the other summands are bounded polynomially in $\ga$. 
Indeed, in the second summand we have $(t_1-v_1)\leq 1$, while 
\[
3v_1-u_1\leq 3\bigg\lfloor\frac{g+\ga+2}{3} \bigg\rfloor-g-\ga+1\leq 2\ga+1.
\]
Since $t_2=g-(t_0+t_1)\leq 2 \lfloor\frac{\ga}{3} \rfloor$ the third summand is bounded, and since $n_i\leq 6\ga$ for every $i$, the fourth and final summand is bounded too. Thus the lower bound in \eqref{last_estimate2} is positive whenever $g\gg \ga$.

\medskip
There are now three basic possibilities for $v_1=\#{\rm V}_{(u_1^{\pr \pr})_3}$ in relation to $s_1=\#{\rm T}_{(u_1^{\pr})_3}$ and $t_1=\#{\rm T}_{(u_1)_3}$. Namely, either $v_1=t_1$, in which case $s_1<v_1$; $v_1=t_1-1$ and $s_1<v_1$; or $v_1=t_1-1$ and $s_1=v_1$. In the first two cases, arguing as in Case 1 above yields $\wt{I}_{S}-\wt{I}_{V}>0$, and therefore $\wt{I}_{T}<\wt{I}_{S}$, whenever $g \gg \ga$. 

Finally, say that $v_1=t_1-1$ and $s_1=v_1$. We begin by calculating $\wt{I}_S-\wt{I}_V$. 
Since ${\rm V}_{(u_1^{\pr\pr})_3}$ is a arithmetic sequence and $(u_1^{\pr})_3=(u_1^{\pr\pr})_3$, the fact that $v_1=s_1$ forces $u_1^{\pr}\leq u_1^{\pr\pr}$. So, repeating the same argument as in Case 1, we find that $(u_1^{\pr}+6\ga+3\mathbb{N})\cap[2g]\subset {\rm S}_{(u^{\pr}_1)_3}\cap{\rm V}_{(u^{\pr\pr}_1)_3}$. We now set 
\[
{\rm V}_{1}^{\pr}:={\rm V}_{(u^{\pr\pr}_1)_3}\setminus((u_1^{\pr}+6\ga+3\mathbb{N})\cap[2g]) \text{ and } 
{\rm S}_{1}^{\pr}:={\rm S}_{(u^{\pr}_1)_3}\setminus ((u_1^{\pr}+6\ga+3\mathbb{N})\cap[2g])
\]
and we let $v^{\pr}_1=\#{\rm V}_{1}^{\pr}$ and $s^{\pr}_1=\#{\rm S}_{1}^{\pr}
$ denote their respective cardinalities.
By construction, we have  $v_1^{\pr}-s^{\pr}_1=v_1-s_1=0$ and
$\wt{I}_{S^{\pr}_{1}}-\wt{I}_{V^{\pr}_{1}}=\wt{I}_{S_{(u^{\pr}_1)_3}}-\wt{I}_{V_{(u^{\pr\pr}_{1})_3}}$. As $r\leq2$, it follows that $u_1^{\pr\pr}\leq g-\ga+4$ and consequently
\begin{equation}\label{I_V_ineq}
\begin{split}
\wt{I}_{V^{\pr}_{1}}\leq   v^{\pr}_1(g-\ga+4)+3\sum_{i=1}^{v^{\pr}_1-1}i\text{ and }
\wt{I}_{V_{3-(u^{\pr\pr}_1)_3}}\leq v_2(2g-2\ga+8)+3\sum_{i=1}^{v^{\pr}_2-1}i.
\end{split}
\end{equation}
Indeed, the first inequality in \eqref{I_V_ineq} is clear. To see that the second inequality holds, note that every insertion in $({V_{(u^{\pr\pr}_1)_3}}+{V_{(u^{\pr\pr}_1)_3}})\cap[2g]$ made to yield ${V_{3-(u_1^{\pr \pr})_3}}$ is necessarily less than $2u_1^{\pr \pr}$. The expression on the right-hand side of the second inequality thus computes the sum of $\#({V_{(u^{\pr\pr}_1)_3}}+{V_{(u^{\pr\pr}_1)_3}})\cap[2g]$ elements of an arithmetic sequence starting from $2g-2\ga+8$, together with $v_2-\#({V_{(u^{\pr\pr}_1)_3}}+{V_{(u^{\pr\pr}_1)_3}})\cap[2g]$ additional elements, each of which is larger than any of the insertions. In particular, the second inequality in \eqref{I_V_ineq} is strict.
{\fl Adding} \eqref{I_V_ineq} to \eqref{I_S_pr} (which describes $\wt{I}_{S^{\pr}_{1}}$ and $\wt{I}_{S_{3-(u^{\pr}_1)_3}}$) while bearing in mind that $v_1^{\pr}=s^{\pr}_1$ and $v_2=s_2$, it follows that
\[
\wt{I}_{S^{\pr}_{1}}-\wt{I}_{V^{\pr}_{1}}
\geq -s_1^{\pr}(2\ga+3)\text{ and }
\wt{I}_{S_{3-(u^{\pr}_1)_3}}-\wt{I}_{V_{3-(u^{\pr\pr}_1)_3}}
\geq
-2s_2(2\ga+3).
\]
In particular, we have
\begin{equation}\label{I_S_minus_I_V}
\wt{I}_S-\wt{I}_V\geq
-(s_1^{\pr}+2s_2)(2\ga+3).
\end{equation} 

{\fl Finally, we use the fact} that $v_1=t_1-1$ together with the estimate for $\wt{I}_V-\wt{I}_T$ obtained in \eqref{last_estimate2} to deduce that
\begin{equation}\label{I_V_minus_I_T}
\wt{I}_V-\wt{I}_T\geq r(v_1+2v_2)-(3v_1-u_1)-3\sum_{i=t_2-r-1}^{t_2-1}i-\sum_{i=1}^{4}n_i.
\end{equation}
Summing \eqref{I_S_minus_I_V} and \eqref{I_V_minus_I_T} yields
\[
\wt{I}_S-\wt{I}_T
\geq r(v_1+2v_2)-(3v_1-u_1)-3\sum_{i=t_2-r-1}^{t_2-1}i-\sum_{i=1}^{4}n_i -(s_1^{\pr}+2s_2)(2\ga+3)
\]
which for the same reasons as before is strictly positive whenever $g\gg\ga$.
\end{proof}

According to Proposition~\ref{maxT1v2}, any $(3,\ga)$-hyperelliptic semigroup of genus $g$ and maximal weight will be a semigroup $\rm S$ 
for which $\#{\rm S}_{(u^{\pr}_1)_3}=\#{\rm T}_{(u_1)_3}$ is
maximal, where $\rm T$ is as in Proposition~\ref{maxT1v2}.

Given such a maximal-weight semigroup ${\rm S}$, let $u_1^{\pr}= u_1({\rm S})$ as usual. Note that if $u_1^{\pr}>u_1$, then either ${\rm S}_{(u^{\pr}_1)_3}\subsetneq{{\rm T}_{(u_1)_3}}$ whenever $(u_1^{\pr})_3=(u_1)_3$; and otherwise ${\rm S}_{(u^{\pr}_1)_3}\subset{{\rm V}_{(u^{\pr\pr}_1)_3}}$ 
where ${\rm V}$ is a semigroup constructed in the proof of Proposition~\ref{maxT1v2}. Either way we have $\wt{I}_{S}>\wt{I}_{T}$, which is absurd. So we conclude that $u_1^{\pr}\leq u_1$.


\subsection{Step 2} To identify $(3,\ga)$-hyperelliptic semigroups of maximal weight, we will compare the inflection of the semigroup ${\rm T}$ of Lemma~\ref{T1form} with the inflection of an arbitrary semigroup ${\rm S}$ that satisfies the following two conditions:   
    \begin{enumerate}
        \item $u_1({\rm S})\leq u_1=g-\ga+1+\lfloor\frac{(g-\ga)_3}{2}\rfloor$; and\label{condition1}
        \item $\#{\rm S}_{(u^{\pr}_1)_3}=\#{\rm T}_{(u_1)_3}$.\label{condition2} 
    \end{enumerate}
\medskip
Let $u_1^{\pr}= u_1({\rm S})$ as usual. When $\ga=0$, the unique semigroup that satisfies conditions \ref{condition1} and \ref{condition2} is the semigroup ${\rm T}$ of Proposition~\ref{maxT1v2}. Indeed, any semigroup  ${\rm S}$ that satisfies condition \ref{condition2} is such that ${\rm S}_{3-(u_1^{\pr})_3}={\rm T}_{3-(u_1)_3}=\emptyset$. Moreover, any ${\rm S} \neq {\rm T}$ that satisfies both conditions 1 and 2 has $u_1^{\pr}$ strictly less than
$u_1({\rm T})=g+1+\lceil\frac{(g)_3}{2}\rceil$. Since $(u_1)_3\neq 0$, it would follow that $u_1^{\pr}\leq g$, and so $2u_1^{\pr}$ would belong to ${\rm S}_{3-(u_1^{\pr})_3}$, contradiction. 
\medskip

{\fl For} $\ga>0$ we will use the following result.


\begin{claim}\label{aux3}
Let $g$ be a positive integer and let $T=\{x_0,x_1,\dots\,x_k\}$ be an increasing 
arithmetic sequence of nonnegative integers, 
where $x_k$ is the maximal integer for which $x_0+x_k\leq 2g$. Now suppose that $S=T\setminus \{x_i\}$ for some $i \neq 0$. Then $\#(S+S)\cap[2g]=\#(T+T)\cap[2g]-1$ (resp., $\#(S+S)\cap[2g]=\#(T+T)\cap[2g]$) if and only if $i=1$ (resp, $i>1$).
\end{claim}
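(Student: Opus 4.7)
The plan is to reduce the claim to a combinatorial count involving sums of pairs of indices, then carry out a short case analysis.

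First, I would parametrize $T$ as $T = \{x_0 + jd : 0 \leq j \leq k\}$, where $d$ is the common difference. The maximality hypothesis gives $2x_0 + kd \leq 2g < 2x_0 + (k+1)d$, and since every element of $T + T$ has the form $2x_0 + md$ with $0 \leq m \leq 2k$, this yields $(T+T) \cap [2g] = \{2x_0 + md : 0 \leq m \leq k\}$, a set of cardinality $k + 1$. Because $S \subsetneq T$, the set $(S+S) \cap [2g]$ is contained in the same arithmetic progression, so the claim reduces to determining which $m \in \{0, 1, \ldots, k\}$ still admit a representation $m = j + \ell$ with $j, \ell \in \{0, 1, \ldots, k\} \setminus \{i\}$.

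Next I would perform a short case analysis on $m$. For $m = 0$ the representation $(0, 0)$ is always available since $i \geq 1$. For $m = 1$ the only representations are $(0, 1)$ and $(1, 0)$, and both are blocked precisely when $i = 1$. For $m \geq 2$ there are $m + 1 \geq 3$ ordered representations $(j, m-j)$ with $0 \leq j \leq m$, and at most two of them (namely $j = i$ and $j = m - i$, which coincide when $m = 2i$) are blocked, so at least one representation always survives.

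Combining the cases yields the claim: when $i = 1$ exactly one index (namely $m = 1$) is lost, giving $\#(S+S)\cap[2g] = \#(T+T)\cap[2g] - 1$; and when $i > 1$ all $k+1$ indices remain achievable, giving equality. Since $i \in \{1, 2, \ldots, k\}$, the two biconditionals follow simultaneously. The argument is entirely elementary and I do not anticipate a genuine obstacle; the only thing that requires care is the boundary book-keeping in the case $m \geq 2$, where the set of blocked representations has one or two elements depending on whether $m = 2i$.
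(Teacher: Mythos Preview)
Your proof is correct and follows essentially the same approach as the paper's. Both identify $(T+T)\cap[2g]$ with the arithmetic progression $\{2x_0, x_0+x_1,\dots,x_0+x_k\}$ and then check which of these sums survive after deleting $x_i$; you carry this out via a systematic case analysis on the index sum $m=j+\ell$, whereas the paper argues more tersely by invoking the single identity $x_0+x_i=x_1+x_{i-1}$ for $i>1$.
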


\begin{proof}[Proof of Claim~\ref{aux3}] To begin, note that $(T+T)\cap[2g]=\{2x_0,x_0+x_1,...,x_0+x_k\}$. It follows that when $i=1$, we have 
$(S+S)\cap[2g]=(T+T)\cap[2g]\setminus{(x_0 +x_1)}$. On the other hand, when $i>1$, we have $(S+S)\cap[2g]=(T+T)\cap[2g]$, because $\{2x_0,x_0+x_1,...,x_0+x_k\}\setminus\{x_0+x_i\}$ belongs to $(S+S)\cap[2g]$, and by construction $x_0+x_i=x_1+x_{i-1}$, so in particular $x_0+x_i$ belongs to $(S+S)\cap[2g]$.
\end{proof}

We now filter possibilities according to 1) whether or not $(g-\ga)_3=2$; and 2) whether or not $u_1$ and $u_1^{\pr}$ have equal residues.

{\fl \bf Case 1: $(g-\ga)_3 \neq 2$.} As in the proof of Lemma~\ref{T1form}, we have $u_1=g-\ga+1$, $\#{\rm T}_{(u_1)_3}=\frac{g+\ga+2}{3}- \frac{(g+\ga-1)_3}{3}$ and $\#{\rm T}_{3-(u_1)_3}=\frac{2\ga+1}{3}-\frac{(2\ga-2)_3}{3}$. 
More precisely, we have
    {\small
    \[
    {\rm T}_{(u_1)_3}=\{u_1,u_1+3,\dots,2g-(g+\ga-1)_3\} \text{ and } {\rm T}_{3-(u_1)_3}=\{2u_1,2u_1+3,\dots,2g-(2\ga+1)_3\}.
    \]
}
{\fl Now} suppose that $(u_1^{\pr})_3=(u_1)_3$. Then $u^{\pr}_1=g-\ga+1-3e$, for some $e \geq 0$. Note that $u^{\pr}_1+u\leq 2g$ for all $u \leq g+\ga-1+3e$, so that $u+u_1^{\pr} \in {\rm S}_{3-(u_1^{\pr})_3}$ for all $u \in {\rm S}_{u_1^{\pr}} \cap [g+\ga-1+3e-(2\ga-2)_3]$. Now 
{\small
\[
\begin{split}
\#{\rm S}_{(u^{\pr}_1)_3}\cap[g+\ga+2+3e-(2\ga-2)_3,2g-(g+\ga-1)_3]&\leq\frac{g-\ga+1}{3}- \frac{(g-\ga+1)_3}{3}+ \frac{(2\ga-2)_3}{3}-e \\
\text{ i.e., }k:=\#{\rm S}_{(u^{\pr}_1)_3}\cap[g+\ga-1+3e-(2\ga-2)_3]&\geq \frac{2\ga+1}{3}- \frac{(2\ga-2)_3}{3}+e.
\end{split}
\]
}
\hspace{-3pt}That is to say, we have ${\rm S}_{(u^{\pr}_1)_3}\cap[g+\ga-1+3e-(2\ga-2)_3]=\{u^{\pr}_1,u^{\pr}_1+a_1,\dots,u^{\pr}_1+a_{k-1}\}$ for distinct positive integers $a_1, \dots, a_{k-1}$. It follows that $2u^{\pr}_1,2u^{\pr}_1+a_1,\dots,2u^{\pr}_1+a_{k-1}$ are $k$ distinct elements that belong to ${\rm S}_{3-(u^{\pr}_1)_3}$. The obvious restriction $k \leq \frac{2\ga+1}{3}- \frac{(2\ga-2)_3}{3}$ now forces $e=0$ and, consequently, ${\rm S}={\rm T}$. 

{\fl Suppose} now that $(u_1^{\pr})_3\neq(u_1)_3$. Then $u^{\pr}_1=g-\ga-2+(u_1)_3-3e$, for some $e\geq0$. In particular, we have $u^{\pr}_1+u\leq 2g$ for all $u\leq g+\ga+2-(u_1)_3+3e$, so $u+u_1^{\pr} \in {\rm S}_{3-(u_1^{\pr})_3}$ for all $u \in {\rm S}_{(u_1^{\pr})_3} \cap [g+\ga+2-(g-\ga+1)_3+3e- (g+\ga-1)_3]$. 

{\fl Now set}
\[
\begin{split}
k_1:&=\#{\rm S}_{(u^{\pr}_1)_3}\cap[g+\ga+5-(g-\ga+1)_3+ 3e-(g+\ga-1)_3,2g-(2\ga+1)_3] \text{ and}\\
k_2:&=\#{\rm S}_{(u^{\pr}_1)_3}\cap[g+\ga+2-(g-\ga+1)_3+ 3e-(g+\ga-1)_3].
\end{split}
\]
We have
\[
\begin{split}    
k_1&\leq\frac{g-\ga-2}{3}+ \frac{(g-\ga+1)_3}{3}-\frac{(2\ga+1)_3}{3}+ \frac{(g+\ga-1)_3}{3}-e, \text{ and therefore}\\
k_2&\geq \frac{2\ga+4}{3}- \frac{2(g+\ga-1)_3}{3}-\frac{(g-\ga+1)_3}{3}+ \frac{(2\ga+1)_3}{3}+e.
\end{split}
\]

{\fl It follows} immediately that $\#{\rm S}_{3-(u^{\pr}_1)_3} \geq k_2$, which in turn means that
$\varphi_1(g,\ga)+ 3e \leq 0$,
where 
\[
\varphi_1(g,\ga):=3+ 2(2\ga-2)_3- 2(g+\ga-1)_3- (g-\ga+1)_3.
\]
It is easy to check that $\varphi_1$ is strictly positive unless $((g)_3,(\ga)_3) \in \{(0,0), (1,1)\}$, in which case $\varphi_1=0$; or $((g)_3,(\ga)_3)=(2,1)$, in which case $\varphi_1=-3$. We treat each of these possibilities in turn.

\medskip
{\fl If} $((g)_3,(\ga)_3) \in \{(0,0), (1,1)\}$, then $e=0$ and thus $u_1^{\pr}=u_1-2$. Our construction forces 
\[
\begin{split}
{\rm S}_{(u^{\pr}_1)_3} \cap [g+\ga+2-(g-\ga+1)_3+ 3e-(g+\ga-1)_3]&= \{u^{\pr}_1,u^{\pr}_1+a_1,\dots,u^{\pr}_1+a_{k_2-1}\} \text{ and }\\
({\rm S}_{(u^{\pr}_1)_3}+{\rm S}_{(u^{\pr}_1)_3})\cap[2g]&=\{2u^{\pr}_1,2u^{\pr}_1+a_1,\dots,2u^{\pr}_1+a_{k_2-1}\}
\end{split}
\]
to be, up to insertions of singletons, arithmetic sequences. According to Claim~\ref{aux3}, this is only possible if $u_1^{\pr}+3$ is the only gap in ${\rm S}_{(u^{\pr}_1)_3} \cap [g+\ga+2-(g-\ga+1)_3+ 3e-(g+\ga-1)_3]$ 
(by which we mean that the elements larger than $u_1^{\pr}+3$ form an arithmetic sequence). The unique semigroup $\rm S$ that satisfies this requirement, in addition to the two conditions \ref{condition1} and \ref{condition2} above, is such that $u_1^{\pr}=u_1-2$, 
{\small
\[
{\rm S}_{(u^{\pr}_1)_3}=\{u_1-2,u_1+4,u_1+7,\dots,2g-(2\ga+1)_3\} \text{ and }{\rm S}_{3-(u^{\pr}_1)_3}=\{2u_1-4,2u_1+2,2u_1+5,\dots,2g-(2\ga+2)_3\}.
\]
}
{\fl We} now compare $\wt{I}_{\rm T}$ and $\wt{I}_{\rm S}$. For this purpose, note that from the second onwards all $\frac{g+\ga+2}{3}-\frac{(g+\ga-1)_3}{3}-1$ (resp. $\frac{2\ga+1}{3}-\frac{(2\ga-2)_3}{3}-1$) successive differences between elements of ${\rm S}_{(u^{\pr}_1)_3}$ and  ${\rm T}_{(u_1)_3}$ (resp., ${\rm S}_{3-(u^{\pr}_1)_3}$ and ${\rm T}_{3-(u_1)_3}$) are $1$ (resp., $-1$). It follows that
{\small
\[
\wt{I}_{{\rm S}_{(u^{\pr}_1)_3}}-\wt{I}_{{\rm T}_{(u_1)_3}}=\frac{g+\ga+2}{3}-\frac{(g+\ga-1)_3}{3}-3\text{ and } \wt{I}_{{\rm S}_{3-(u^{\pr}_1)_3}}-\wt{I}_{{\rm T}_{3-(u_1)_3}}=-\frac{2\ga+1}{3}+\frac{(2\ga-2)_3}{3}-3.
\]
}
{\fl Summing} we obtain $\wt{I}_{{\rm S}}-\wt{I}_{{\rm T}}=\frac{g-\ga+1}{3}-\frac{(g+\ga-1)_3}{3}+\frac{(2\ga-2)_3}{3}-6$, which is positive whenever g is large relative to $\ga$.
\medskip
{\fl Suppose} now that $((g)_3,(\ga)_3)=(2,1)$. Then either $e=0$ or $e=1$. 
Say that $e=0$; then $u_1^{\pr}=g-\ga=u_1-1$. In particular, ${\rm S}_{(u^{\pr}_1)_3} \sub ((u_1-1)+3\mathbb{N})\cap[2g]$, and it is easy to see that in fact ${\rm S}_{(u^{\pr}_1)_3}$ is obtained from the arithmetic sequence $((u_1-1)+3\mathbb{N})\cap[2g]= \{u_1-1, u_1+2,\dots, 2g\}$ by deleting a single element $(u_1-1)+ 3\ell$, for some $\ell \geq 1$. Claim~\ref{aux3} explains how to classify all of the corresponding semigroups ${\rm S}$ satisfying conditions \ref{condition1} and \ref{condition2} above. The first possibility is that $\ell>1$, in which case we have 
\[
{\rm S}_{3-(u_1^{\pr})_3}= ({\rm S}_{(u^{\pr}_1)_3}+{\rm S}_{(u^{\pr}_1)_3})= \{2u_1-2,2u_1+1,\dots,2g-2\}.
\]
Here $\ell<2\ga$, and more precise restrictions on $\ell$ arise from the structure of ${\rm S}_0$, which is as-yet unspecified. The second possibility is that $\ell=1$, in which case $({\rm S}_{(u^{\pr}_1)_3}+{\rm S}_{(u^{\pr}_1)_3})$ is properly contained in ${\rm S}_{3-(u_1^{\pr})_3}$ and in order to minimize inflection we may take
\[
{\rm S}_{3-(u_1^{\pr})_3}=\{2u_1-2,2u_1+4,2u_1+7,\dots,2g-2\} \sqcup \{2u_1-2-n_1\}
\]
where as usual $n_1$ denotes the smallest nonzero element of ${\rm T}_0$.
Let ${\rm S}^1$ (resp., ${\rm S}^2$) denote the inflection-minimizing semigroup when $\ell>1$ (resp., $\ell=1$). We compute
\[
\wt{I}_{{\rm S}^1}-\wt{I}_{{\rm T}}=\frac{2g-2\ga-2}{3}-3\ell \text{ and }
\wt{I}_{{\rm S}^2}-\wt{I}_{{\rm T}}=
\frac{2g-2\ga+1}{3}-5-n_1.
\]


\medskip
{\fl Finally}, suppose that $((g)_3,(\ga)_3)=(2,1)$ and $e=1$. Then $u_1^{\pr}=g-\ga-3=u_1-4$, and ${\rm S}_{(u_1^{\pr})_3}$ (resp., ${\rm S}_{3-(u_1^{\pr})_3}$) is obtained from the arithmetic sequence $(u_1-4)+3\mb{N}) \cap [2g]$ (resp., $(2u_1-8)+3\mb{N}) \cap [2g]$) it generates via two deletions. However, an argument analogous to that used to prove Claim~\ref{aux3} shows this is only possible if those deletions from $(u_1-4)+3\mb{N}) \cap [2g]$ are precisely the second and third smallest elements of the arithmetic sequence. Thus
{\small
\[
{\rm S}_{(u_1^{\pr})_3}= \{u_1-4,u_1+5,u_1+8,\dots, 2g\} \text{ and } {\rm S}_{3-(u_1^{\pr})_3}= \{2u_1-8, 2u_1+1, 2u_1+4, \dots, 2g-1\}.
\]
}
\hspace{-3pt}It follows that $\wt{I}_{{\rm S}}-\wt{I}_{{\rm T}}=\frac{2g-2\ga+1}{3}-13$ is positive whenever $g \gg \ga$.

    \medskip
    {\fl \bf Case 2: $(g-\ga)_3=2$.} In this case, we have $u_1=g-\ga+2$, 
    \[
    \begin{split}
    \#{\rm T}_{(u_1)_3}&=\frac{2g-u_1}{3}+1-\frac{(2g-u_1)_3}{3}=\frac{g+\ga+1}{3}- \frac{(2\ga)_3}{3}, \text{ and } \\
    \#{\rm T}_{3-(u_1)_3}&=\frac{2g-2u_1}{3}+2-\frac{(2g-2u_1)_3}{3}=\frac{2\ga+2}{3}- \frac{(2\ga-1)_3}{3}.
    \end{split}
    \]
    More precisely, according to Remark~\ref{completeT2}, we have
    {\small
    \[
    {\rm T}_{(u_1)_3}=\{u_1,u_1+3,\dots,2g-(2\ga)_3\} \text{ and } {\rm T}_{3-(u_1)_3}=\{2u_1-n_1,2u_1,2u_1+3,\dots,2g-(2\ga-1)_3\}
    \]
}
{\fl Now} suppose that $(u_1^{\pr})_3=(u_1)_3$. Then $u^{\pr}_1=g-\ga+2-3e$, for some $e\geq0$. Note that $u^{\pr}_1+u\leq 2g$, 
for all $u\leq g+\ga-2+3e$; it follows that $u^{\pr}_1+u \in {\rm S}_{3-(u_1^{\pr})_3}$ for all $u \in {\rm S}_{(u_1^{\pr})_3} \cap [g+\ga-4+3e]$. Now
\[
\#{\rm S}_{(u^{\pr}_1)_3}\cap[g+\ga-1+3e,2g-(2\ga)_3]\leq \frac{g-\ga+4}{3}- \frac{(2\ga)_3}{3}-e
\]
which in turn implies that
\[
k:=\#{\rm S}_{(u^{\pr}_1)_3}\cap[g+\ga-4+3e] \geq \frac{2\ga}{3}+ e-1.
\]

{\fl The} obvious restriction $k \leq \#{\rm T}_{3-(u_1)_3}$ now forces $\varphi_2(\ga)+ 3e \leq 0$, where
\[
\varphi_2(\ga):=(2\ga-1)_3-5.
\]

{\fl It} follows immediately that $e \leq 1$. If $e=0$, then ${\rm S}={\rm T}$; so we may assume e=1 without loss of generality. Writing ${\rm S}_{(u_1^{\pr})_3}= \{u_1^{\pr}, u_1^{\pr}+a_1,\dots, u_1^{\pr}+a_{k-1}\}$ as usual, our construction shows that ${\rm S}_{(u_1^{\pr})_3}$ and $({\rm S}_{(u^{\pr}_1)_3}+{\rm S}_{(u^{\pr}_1)_3})\cap[2g]=\{2u^{\pr}_1,2u^{\pr}_1+a_1,\dots,2u^{\pr}_1+a_{k-1}\}$ are, up to insertions by singletons, arithmetic sequences. 
Claim~\ref{aux3} now implies $u_1^{\pr}+3$ is the unique gap in ${\rm S}_{(u^{\pr}_1)_3}\cap[g+\ga-1+3e]$. 
The only semigroup $\rm S$ that satisfies this requirement, in addition to the two conditions \ref{condition1} and \ref{condition2} above, is such that $u_1^{\pr}=u_1-3$, 
{\small
\[
{\rm S}_{(u^{\pr}_1)_3}=\{u_1-3,u_1+3,u_1+6,\dots,2g-(2\ga)_3\} \text{ and } {\rm S}_{3-(u^{\pr}_1)_3}=\{2u_1-6,2u_1,2u_1+3,\dots,2g-(2\ga-1)_3\}.
\]
}
\hspace{-3pt}It is now easy to see that 
$\wt{I}_{{\rm S}}-\wt{I}_{{\rm T}}=(3u_1-9)-(3u_1-n_1)=n_1-9$.

\medskip
{\fl Finally}, suppose that $(u_1^{\pr})_3\neq(u_1)_3$; then $u^{\pr}_1=g-\ga-3e$ for some $e\geq0$. Note that $u^{\pr}_1+u\leq 2g$ for all $u\leq g+\ga+3e$. In particular, we have $u+u_1^{\pr} \in {\rm S}_{3-(u_1^{\pr})_3}$ for all $u \in {\rm S}_{(u_1^{\pr})_3} \cap [g+\ga+3e-(2\ga)_3]$. Now
\[
\begin{split}    
\#{\rm S}_{(u^{\pr}_1)_3}\cap[g+\ga+3e-(2\ga)_3+3,2g-(2\ga-1)_3]&\leq \frac{g-\ga}{3} -\frac{(2\ga-1)_3}{3}+ \frac{(2\ga)_3}{3}-e, \text{ i.e.,}\\
k:=\#{\rm S}_{(u^{\pr}_1)_3}\cap[g+\ga+3e-(2\ga)_3]&\geq \frac{2\ga+1}{3}-\frac{2(2\ga)_3}{3}+\frac{(2\ga-1)_3}{3}+e.
\end{split}
\]
The obvious restriction $k \leq \#{\rm T}_{3-(u_1)_3}$ now forces $\varphi_3(\ga)+ 3e \leq 0$, where
\begin{equation}\label{phi3_restriction}
\varphi_3(\ga):=2(2\ga-1)_3-2(2\ga)_3-1.
\end{equation}
It follows immediately that $e \leq 1$. Say that $e=0$; then $u^{\pr}=u_1-2$. If further $(\ga)_3=0$ and condition~\ref{condition2} holds, then ${\rm S}_{(u_1^{\pr})_3}=((u_1-2)+3\mathbb{N})\cap[2g]$, which contradicts the minimality property of $u_1$ established in Lemma~\ref{T1form}. On the other hand, if $(\ga)_3 \in \{1,2\}$, then ${\rm S}_{(u_1^{\pr})_3}$ is obtained by deleting one element from $((u_1-2)+3\mb{N}) \cap [2g]$, while ${\rm S}_{3-(u_1^{\pr})_3})=((2u_1-4)+3\mb{N}) \cap [2g]$). 
According to Claim~\ref{aux3}, this means that ${\rm S}_{(u_1^{\pr})_3}= ((u_1-2)+3\mb{N}) \cap [2g] \setminus \{(u_1-2)+3\ell\}$ for some positive integer $1 < \ell < 2\ga$. We compute 
\[
\wt{I}_{\rm S}-\wt{I}_{\rm T}= \frac{g-7\ga-1}{3}-3\ell+ \frac{2(2\ga)}{3}-\frac{4(2\ga-1)_3}{3}
\]
which is strictly positive whenever $g \gg \ga$.

\medskip
Finally, say that $e=1$; the nonpositivity restriction \eqref{phi3_restriction} forces $(\ga)_3 \in \{1,2\}$. In either case, we have $u_1^{\pr}=u_1-5$, and ${\rm S}_{(u_1^{\pr})_3}$ (resp., ${\rm S}_{3-(u_1^{\pr})_3}$) is obtained from $((u_1-5)+3\mb{N}) \cap [2g]$ (resp., $((2u_1-10)+3\mb{N}) \cap [2g]$ via {\it two} deletions. It follows just as in the analysis of the case of $((g)_3,(\ga)_3)=(2,1)$ and $e=1$ in Case 1 above that the two deletions from $((u_1-5)+3\mb{N}) \cap [2g]$ are necessarily the second and third elements of that arithmetic sequence, i.e., that
{\small
\[
\begin{split}
{\rm S}_{(u_1^{\pr})_3}&=\{u_1-5,u_1+4,u_1+7,\dots,2g-(2\ga-1)_3\} \text{ and}\\
{\rm S}_{3-(u_1^{\pr})_3}&=\{2u_1-10, 2u_1-1, 2u_1+2, \dots, 2g-(2\ga)_3\}.
\end{split}
\]
}
{\fl We} compute $\wt{I}_{{\rm S}}-\wt{I}_{{\rm T}}=n_1-15+\frac{g-\ga-1}{3}-\frac{(2\ga)_3}{3}+ \frac{(2\ga-1)_3}{3}$. 

\subsection{Step 3} To conclude the proof of Theorem~\ref{maximal_weight_conjecture}, we use the following result.

\begin{prop}\label{chooseT0}
 Let ${\rm S}$ and ${\rm T}$ be semigroups of maximal weight in ${\rm H}^{{\rm S}_0}$ and ${\rm H}^{{\rm T}_0}$, respectively, and suppose that $6 \in {\rm T}_0$.  Then
\[ 
W_{\rm T} \geq W_{\rm S} \iff \sum_{s \in {\rm S}_0} s \leq \sum_{t \in {\rm T}_0} t.
\]
{\fl In} particular, ${\rm S}$ has maximal weight in $\rm H$ if and only if $6\in {\rm S}$.
\end{prop}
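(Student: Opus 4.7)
The plan is to leverage the structural rigidity of maximal-weight completions established in Steps 1 and 2. By Proposition~\ref{maxT1v2}, Lemma~\ref{T1form}, and Remark~\ref{completeT2} (combined with the case-by-case analysis closing out Step 2), for any admissible choice of residue-zero part ${\rm S}_0$, the maximal-weight semigroup in ${\rm H}^{{\rm S}_0}$ has a canonical shape: its minimal non-multiple-of-3 insertion is $u_1=g-\ga+1+\lfloor(g-\ga)_3/2\rfloor$, the residue set ${\rm S}_{(u_1)_3}$ equals the arithmetic progression $(u_1+3\mb{N})\cap[2g]$, and ${\rm S}_{3-(u_1)_3}$ equals $({\rm S}_{(u_1)_3}+{\rm S}_{(u_1)_3})\cap[2g]$ augmented by the single element $2u_1-n_1^{\rm S}$ exactly when $(g-\ga)_3=2$; here $n_1^{\rm S}$ denotes the smallest positive element of ${\rm S}_0$. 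This reduction is the starting point.

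Next, one uses that under this reduction both the set ${\rm S}_{(u_1)_3}$ and its sumset intersected with $[2g]$ depend only on $g$ and $\ga$, not on ${\rm S}_0$. A direct subtraction then yields
\[
\wt{I}_{\rm S}-\wt{I}_{\rm T}=(\wt{I}_{{\rm S}_0}-\wt{I}_{{\rm T}_0})+\epsilon,
\]
where $\epsilon=0$ when $(g-\ga)_3\neq 2$ and $\epsilon=n_1^{\rm T}-n_1^{\rm S}$ otherwise. Combined with Remark~\ref{rem0} applied to the full semigroups, which gives $W_{\rm T}\geq W_{\rm S}\iff \wt{I}_{\rm T}\leq \wt{I}_{\rm S}$, this yields the claimed biconditional modulo a sign check on $\epsilon$.

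For the ``in particular'' conclusion, I would translate minimization of $\sum_{s\in{\rm S}_0}s$ into a statement about the underlying base semigroup $\text{base}({\rm S}):=({\rm S}\cap 3\mathbb{Z})/3$ (a numerical semigroup of genus $\ga$). Writing
\[
\sum_{s\in{\rm S}_0}s=C_{g,\ga}-3\sum\text{gaps}(\text{base}({\rm S}))
\]
for a constant $C_{g,\ga}$ depending only on $g$ and $\ga$ shows that minimizing $\sum_{s\in{\rm S}_0}s$ amounts to maximizing the gap-sum of the base. Applying Remark~\ref{rem0} to base semigroups of genus $\ga$, the gap-sum is maximized uniquely by the hyperelliptic semigroup $\{0,2,4,\ldots\}$, whose pullback contains $6$. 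Moreover, the hyperelliptic base also uniquely attains the minimal value $n_1^{\rm S}=6$, so the $\epsilon$-correction in the $(g-\ga)_3=2$ regime reinforces rather than opposes the main $\sum_{s\in{\rm S}_0}s$ comparison.

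The main obstacle will be precisely this $\epsilon$-bookkeeping in the $(g-\ga)_3=2$ regime, where the two contributions to $\wt{I}_{\rm S}-\wt{I}_{\rm T}$ can conspire to cancel for some pairs $({\rm S}_0,{\rm T}_0)$; pinning down the iff on the nose, together with the uniqueness of the hyperelliptic maximizer in the ``in particular'' statement, will require a careful examination of borderline cases.
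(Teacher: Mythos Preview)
Your reduction in the first two paragraphs is close to the paper's strategy, but there is a genuine gap in the final step, centered on the sign and size of your correction term $\epsilon$.

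First, a minor point: your ``canonical shape'' is not quite the maximal-weight representative in ${\rm H}^{{\rm S}_0}$ when $(g-\ga)_3=2$. The Step~2 case analysis shows that the alternative with $u_1'=u_1-3$ and ${\rm S}_{(u_1')_3}=\{u_1',\,u_1'+6,\,u_1'+9,\dots\}$ has inflection differing from your ``arithmetic progression plus $2u_1-n_1$'' form by exactly $n_1-9$; so when $n_1=6$ (i.e.\ for ${\rm T}$ itself) the alternative wins, and indeed this is the form appearing in Theorem~\ref{maximal_weight_conjecture} for that residue class. This only shifts your $\epsilon$ by a bounded constant, so it is not fatal on its own.

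The real problem is your claim that $\epsilon=n_1^{\rm T}-n_1^{\rm S}$ ``reinforces'' the main comparison. Since $6\in{\rm T}_0$ forces $n_1^{\rm T}=6$, while any ${\rm S}_0\neq{\rm T}_0$ has $n_1^{\rm S}\geq 9$, you get $\epsilon\leq -3$. Thus $\epsilon$ works \emph{against} the positivity of $\wt{I}_{\rm S}-\wt{I}_{\rm T}$, and to conclude you must show that $\wt{I}_{{\rm S}_0}-\wt{I}_{{\rm T}_0}$ dominates a quantity of order $n_1^{\rm S}$. Your proposal has no mechanism for this: knowing that the hyperelliptic base uniquely minimizes $\wt{I}_{\,\cdot_0}$ says nothing about \emph{by how much} it does so, as a function of the competing multiplicity.

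This quantitative comparison is exactly the content of the paper's proof. The paper passes to the base semigroups ${\rm T}^1={\rm S}_0/3$ and ${\rm T}^2={\rm T}_0/3$ (both of genus $\ga$, with ${\rm T}^2$ hyperelliptic), \emph{fixes} the multiplicity $m_1=n_1({\rm T}^1)$, and identifies the minimally-inflected genus-$\ga$ semigroup with that multiplicity: its Dyck path stabilizes to a unit staircase after $m_1-1$ initial vertical steps. An elementwise comparison with the hyperelliptic Dyck path then gives $\wt{I}_{{\rm T}^1}-\wt{I}_{{\rm T}^2}\geq (\ga-1-\tfrac{m_1-1}{2})(m_1-2)$, hence $\wt{I}_{{\rm S}_0}-\wt{I}_{{\rm T}_0}$ grows linearly in $m_1$ and therefore in $n_1^{\rm S}$. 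This is what overcomes the negative $\epsilon$, and it is the missing ingredient in your outline.
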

\begin{proof}
In light of our analysis of Cases 1 and 2 above, it suffices to show that $\wt{I}_{{\rm S}_0}- \wt{I}_{{\rm T}_0}$ is $g$-asymptotically larger than $n_1({\rm S})+{\text (constant)}$. On the other hand, each of ${\rm T}_0$ and ${\rm S}_0$ is equal to a (triple) dilation of a semigroup of genus $\ga$. Our assumption that ${\rm T}_0$ contains 6 means precisely that the genus-$\ga$ semigroup associated with ${\rm T}_0$ is hyperelliptic. Accordingly we must obtain a uniform lower bound for the difference $\wt{I}_{{\rm T}^1}-\wt{I}_{{\rm T}^2}$, where ${\rm T}^1$ and ${\rm T}^2$ are semigroups of genus $\ga>0$, ${\rm T}^2$ is hyperelliptic, and $n_1=n_1({\rm T}^1)>1$ is fixed. To do so, we may suppose that ${\rm T}^1$ is {\it minimally} inflected among all genus-$\ga$ semigroups ${\rm S}$ with $n_1(S)=n_1$. Graphically, this means that the Dyck path that encodes ${\rm T}^1$ stabilizes to a staircase with step-size 1 after precisely $(n_1-1)$ initial vertical moves upward, as in Figure 1. This, however, means precisely that the difference between the $i$th nonzero positive elements of ${\rm T}^1$ and ${\rm T}^2$ (where $n_i<n_j$ whenever $i<j$) satisfies $n_i({\rm T}^1)-n_i({\rm T}^2)=n_1-2$. It then follows immediately that $\wt{I}_{{\rm T}^1}-\wt{I}_{{\rm T}^2} \geq (g-1-\frac{n_1-1}{2})(n_1-2)$, i.e., that $\wt{I}_{{\rm S}_0}- \wt{I}_{{\rm T}_0} \geq 3(g-1-\frac{n_1-1}{2})(n_1-2)$.
\end{proof}

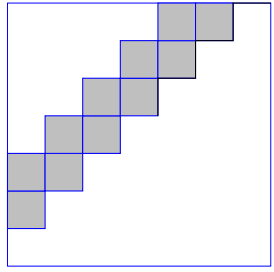
\begin{figure}
\begin{tikzpicture}[scale=0.50]
\draw[blue, very thin] (0,0) rectangle (7,7);
\filldraw[draw=blue, fill=lightgray] (0,1) rectangle (1,2);
\filldraw[draw=blue, fill=lightgray] (0,2) rectangle (1,3);

\filldraw[draw=blue, fill=lightgray] (1,2) rectangle (2,3);
\filldraw[draw=blue, fill=lightgray] (1,3) rectangle (2,4);

\filldraw[draw=blue, fill=lightgray] (2,3) rectangle (3,4);
\filldraw[draw=blue, fill=lightgray] (2,4) rectangle (3,5);

\filldraw[draw=blue, fill=lightgray] (3,4) rectangle (4,5);
\filldraw[draw=blue, fill=lightgray] (3,5) rectangle (4,6);

\filldraw[draw=blue, fill=lightgray] (4,5) rectangle (5,6);
\filldraw[draw=blue, fill=lightgray] (4,6) rectangle (5,7);

\filldraw[draw=blue, fill=lightgray] (5,6) rectangle (6,7);

\draw (4,4) -- (4,5) -- (5,5) -- (5,6) -- (6,6) -- (6,7) -- (7,7);

\end{tikzpicture}
\caption{The area of the shaded boxes computes the minimal inflection discrepancy $\wt{I}_{{\rm T}^1}-\wt{I}_{{\rm T}^2}$ in the proof of Proposition~\ref{chooseT0} when $g=7$ and $n_1({\rm T}^1)=4$.}
\end{figure}

\section{The geometric realization problem for $(N,\ga)$-hyperelliptic semigroups of maximal weight}\label{geometric_realization}

\begin{conj}\label{maximal_weight_conj}
Assume that $N \geq 3$ is prime, $(\frac{N-1}{2})|(g-\ga)$, and that 
$(\frac{2(g-\ga)}{N-1})_N \neq N-1$. Then there is a unique $(N,\ga)$-hyperelliptic semigroup of $g$-asymptotically maximal weight given by
\[
{\rm S}_N:=N \langle 2,2\ga+1 \rangle+ \bigg \langle \frac{2(g-\ga)}{N-1}+1, \frac{2(g-\ga)}{N-1}+N+1 \bigg \rangle.
\]
\end{conj}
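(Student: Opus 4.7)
The plan is to extend the three-stage strategy of Section~\ref{maximal_weight_conj_proof} to general prime $N \geq 3$. First, for a $(N,\ga)$-hyperelliptic semigroup ${\rm S}$, decompose ${\rm S}^\ast = \bigsqcup_{i=0}^{N-1} {\rm S}_i$ by residues modulo $N$, let $u_1=u_1({\rm S})$ denote the minimum element of nonzero $N$-residue, and perform the genus calculation analogous to Remark~\ref{u1_remarks} to locate the minimum possible value $u_1^H$ across ${\rm H}(g,\ga)$. The conjectural value $u_1 = \frac{2(g-\ga)}{N-1}+1$ should arise as the uniform $u_1$ for which every cardinality $\#{\rm S}_{(k u_1)_N}$, $k=1,\dots,N-1$, is simultaneously maximized; the two hypotheses $(\frac{N-1}{2})\mid(g-\ga)$ and $(\frac{2(g-\ga)}{N-1})_N \neq N-1$ are precisely what is needed to make the arithmetic progression $\{u_1, u_1+N, \dots\}$ of length $\frac{2(g-\ga)}{N-1}+1$ and each of its Minkowski powers fit cleanly into $[2g]$ with no forced insertions perturbing this count.

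The Step~1 analogue would proceed in two waves. First, generalizing Proposition~\ref{maxT1}, I would show that among semigroups in ${\rm H}_{u_1}^{{\rm S}_0}$, those of maximal weight must have $\#{\rm S}_{(u_1)_N}$ maximal: the ``smallest element in the other residue classes exceeds the largest element of ${\rm S}_{(u_1)_N} \setminus (u_1+\ov{{\rm S}}_0)$'' argument extends uniformly once $g \gg \ga$. Next, varying $u_1$, I would establish the $N$-version of Lemmas~\ref{T1contained}, \ref{T1form} and of Proposition~\ref{maxT1v2}, showing that the inflection-minimizing $u_1$ is precisely $\frac{2(g-\ga)}{N-1}+1$. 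The bookkeeping uses that the Minkowski power $k \cdot {\rm S}_{(u_1)_N}$ is always contained in ${\rm S}_{(k u_1)_N}$; a recursive genus count across the $N-1$ non-trivial residue classes, each an arithmetic progression with common difference $N$, should yield the desired lower bound on $u_1$.

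Step~2 would then compare, for fixed ${\rm S}_0$, the inflection of the candidate semigroup ${\rm S}_N$ with that of arbitrary ${\rm S}$ satisfying $u_1^{\pr} \leq u_1$ and $\#{\rm S}_{(u_1^{\pr})_N} = \#{\rm T}_{(u_1)_N}$. The key combinatorial tool is the generalization of Claim~\ref{aux3}: deleting the second element of an arithmetic progression removes exactly one element from each of its Minkowski powers, while deleting any later element preserves them; I would need to track this simultaneously across {\it all} $N-1$ Minkowski products $k \cdot {\rm S}_{(u_1^{\pr})_N}$. Finally, Step~3 would adapt Proposition~\ref{chooseT0} via the same Dyck-path argument to show that ${\rm S}_0$ must be the $N$-fold dilation of $\langle 2, 2\ga+1\rangle$, because any other choice of ${\rm S}_0$ forces an inflation of $\wt{I}_{{\rm S}_0}$ that $g$-asymptotically dominates the correction $n_1 + O(1)$ coming from the non-residue-$0$ classes.

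The main obstacle is Step~2. For $N=3$ the sub-case analysis already branches on $(g)_3$, $(\ga)_3$, and the shift $e=(u_1-u_1^{\pr})/3$; for general $N$ the space of admissible deletions from the $N-1$ arithmetic progressions ${\rm S}_{(k u_1^{\pr})_N}$ grows considerably, and one must verify that the uniform inflation bound dominates across every sub-case. The divisibility hypothesis $(\frac{N-1}{2})\mid(g-\ga)$ is exactly what renders these progressions commensurable, and the residue restriction $(\frac{2(g-\ga)}{N-1})_N \neq N-1$ is the $N$-analogue of avoiding the $(g-\ga)_3=2$ edge case in Theorem~\ref{maximal_weight_conjecture}, where the extra forced insertion $2u_1-n_1$ appears in Remark~\ref{completeT2}. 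Pushing Step~2 through in this generality will likely require a new combinatorial identity tracking Minkowski sums of truncated arithmetic progressions modulo $N$, and it is precisely to delimit the scope of this analysis that the conjecture is stated under the two restrictive hypotheses above rather than in complete generality.
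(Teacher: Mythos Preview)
The statement you are addressing is Conjecture~\ref{maximal_weight_conj}, and the paper does \emph{not} prove it. Section~\ref{geometric_realization} is explicitly speculative: after stating the conjecture, the paper only verifies that ${\rm S}_N$ has genus $g$ (Remark~\ref{S_N_facts}), computes its weight $w_N$ and compares it to the Carvalho--Torres weight $w_N^{\ast}$, and then shows in Proposition~\ref{buchweitz_for_S_N} that ${\rm S}_N$ passes Buchweitz' realizability test. None of this constitutes a proof that ${\rm S}_N$ is the unique $(N,\ga)$-hyperelliptic semigroup of asymptotically maximal weight; the conjecture remains open in the paper.

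Your proposal is therefore not competing against any proof in the paper. What you have written is a plausible outline for how the three-stage argument of Section~\ref{maximal_weight_conj_proof} might generalize, and you yourself identify the main obstruction: the Step~2 case analysis, which for $N=3$ already splits on residues of $g$ and $\ga$ and on the shift parameter $e$, would for general prime $N$ require controlling deletions simultaneously across all $N-1$ Minkowski powers $k\cdot{\rm S}_{(u_1^{\pr})_N}$. You correctly flag that a new combinatorial tool beyond Claim~\ref{aux3} would be needed. This is an honest assessment of a genuine gap, not a proof; and since the paper offers no proof either, there is nothing further to compare.
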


\begin{rem}\label{S_N_facts}
${\rm S}_N$ is of genus $g$. Indeed, given $0 \leq j \leq N-1$, let ${\rm S}_{N,j} \sub {\rm S}_N \cap [2g]$ denote the critical subset of elements with $N$-residue $j$. By construction, we have
    \[
    {\rm S}_{N,0}= \{2N, 4N, \dots, 2\ga N\} \text{, while } {\rm S}_{N,(ju_1)_N}= 
    \{ju_1, ju_1+N, \dots\}
    \]
    for all $1 \leq j \leq N-1$, where $u_1=\frac{2(g-\ga)}{N-1}+1$. Our expectation that ${\rm S}_N$ is of maximal weight is predicated on the facts that
    \begin{itemize}
    \item ${\rm S}_N$ contains $N\langle 2, 2\ga+1 \rangle$, i.e. its subset of residue-zero elements is of maximal weight;
    \item its subsets of fixed positive residue $j>0$ are all saturated, in the sense that all gaps of residue $j$ residue are strictly smaller than any nonzero elements belonging to ${\rm S}_{N,j}$; and
    \item all insertions to ${\rm S}_N$ beyond the residue-zero subset $N\langle 2, 2\ga+1 \rangle$ are generated by insertions from a single positive residue class.
    \end{itemize}
    The corresponding sets of gaps ${\rm G}_{N,j}$ are given by
    \[
    {\rm G}_{N,0}= \{N,3N, \dots, (2\ga-1)N\} \text{ and } {\rm G}_{N,(ju_1)_N}= \{(ju_1)_N, (ju_1)_N+N, \dots, ju_1-N\}
    \]
    for all $1 \leq j \leq N-1$, respectively. In particular, this means that for positive values of $j$, the gap set ${\rm G}_{N,j}$ is of cardinality 
    $g_{N,(ju_1)_N}:= \frac{ju_1-(ju_1)_N}{N}$, and consequently the complement of ${\rm S}_N$ in $\mb{N}$ is of cardinality
    \[
    \ga+ \sum_{j=1}^{N-1} g_{N,(ju_1)_N}= \ga+ \frac{2(g-\ga)}{N(N-1)} \sum_{j=1}^{N-1} j= g.
    \]
    \end{rem}
    
    \begin{rem}
    The weight of ${\rm S}_N$ is calculated by
    \begin{equation}\label{w_N}
    w_N:= \sum_{j=1}^{(N-1)} \sum_{k=0}^{(g_{N,(ju_1)_N}-1)} ((ju_1)_N+kN) + N\ga^2- \binom{g+1}{2}.
    \end{equation}
    Pairing complementary summands in arithmetic progressions, we see that the $j$th contribution to the double sum in \eqref{w_N} is precisely
    \[
   \frac{(ju_1)-(ju_1)_N}{N} \cdot \frac{(ju_1)+(ju_1)_N-N}{2}= \frac{u_1^2}{2N} j^2- \frac{1}{2N} ((ju_1)_N)^2- \frac{u_1}{2} j+ \frac{1}{2} (ju_1)_N.
    \]
    Consequently, we may rewrite the double sum as
    {\small
    \[
    \frac{u_1^2}{2N} \sum_{j=1}^{N-1}j^2- \frac{1}{2N} \sum_{j=1}^{N-1}((ju_1)_N)^2- \frac{u_1}{2} \sum_{j=1}^{N-1}j+ \frac{1}{2} \sum_{j=1}^{N-1}(ju_1)_N= \frac{u_1^2-1}{2N} \sum_{j=1}^{N-1}j^2- \frac{u_1-1}{2} \sum_{j=1}^{N-1}j.
    \]
    }

{\fl Here}
\[
\sum_{j=1}^{N-1} j^2= \frac{(N-1)N(2N-1)}{6}, \text{ while } \sum_{j=1}^{N-1} j= \frac{(N-1)N}{2}.
\]
We deduce that
\[
w_N= \frac{(2N-1)}{3(N-1)}(g-\ga)^2+ \frac{N-2}{6}(g-\ga)+ N\ga^2- \binom{g+1}{2}.
\]
\end{rem}

We now compute the weight of a semigroup (implicitly) considered by Carvalho--Torres in \cite{CT}, namely ${\rm S}_N^{\ast}:= N \langle 2,2\ga+1 \rangle+ \langle \frac{2}{N-1}g- \frac{2N}{N-1}\ga+1 \rangle$, whenever $g$ and $\ga$ are both multiples of $\frac{N-1}{2}$ and $(\frac{2g}{N-1})_N \neq N-1$.

\medskip
To this end, first note that the critical subsets of ${\rm S}^{\ast}_N$ of fixed residues are described by
\[
{\rm S}^{\ast}_{N,0}= \{2N, 4N, \dots, 2\ga N, (2\ga+1)N, \dots\}, \text{ while } {\rm S}^{\ast}_{N,(ju_1)_N}= ju_1+ \ov{{\rm S}^{\ast}}_{N,0}
\]
for all $1 \leq j \leq N-1$, where $u_1=\frac{2}{N-1}g- \frac{2N}{N-1}\ga+1$. The corresponding gap sets are
{\small
\[
{\rm G}^{\ast}_{N,0}= \{N, 3N, \dots, (2\ga-1) N\} \text{ and } {\rm G}^{\ast}_{N,(ju_1)_N}= \{(ju_1)_N, (ju_1)_N+ N, \dots, ju_1-N\} \sqcup (ju_1+{\rm G}^{\ast}_{N,0})
\]
}
for all $1 \leq j \leq N-1$. It follows that the weight of ${\rm S}_N^{\ast}$ is computed by
{\small
\[
\begin{split}
w_N^{\ast}:&= 
\frac{u_1^2-1}{2N} \sum_{j=1}^{N-1}j^2- \frac{u_1-1}{2} \sum_{j=1}^{N-1}j+ u_1 \ga \sum_{j=1}^{N-1}j  +2N\ga^2- \binom{g+1}{2} \\
&=\frac{(2N-1)}{3(N-1)}(g-N\ga)^2+ \frac{(N-2)}{6}(g-N\ga)+ (g-N\ga)N\ga+ 2N\ga^2+ \binom{N}{2}\ga- \binom{g+1}{2}.
\end{split}
\]
}
The crucial point here is that the $g\ga$-coefficient of $w_N$ is strictly greater than that of $w_N^{\ast}$, and this means that for any fixed choice of $\ga$, $w_N$ is $g$-asymptotically larger than $w_N^{\ast}$.

\begin{prob} Find an $N$-fold pointed cover $\pi: (C,p) \rightarrow (B,q)$ totally ramified in $p$ and such that ${\rm S}(C,p)=S_N$.
\end{prob}

Conjecture~\ref{maximal_weight_conjecture} predicts, in particular, that if such a cover exists, then when the target is of positive genus the source curve is an $N$-fold cover of a hyperelliptic curve marked in a Weierstrass point. It is worth noting that the case in which $N=3$ and the target is $\mb{P}^1$ is well-understood in terms of the geometry of the canonical embedding of the trigonal source curve; see \cite{Co,SV,BS}.

\subsection{Buchweitz' criterion and the realizability of ${\rm S}_N$ and ${\rm S}^{\ast}_N$}
Determining necessary and sufficient combinatorial conditions for when a given numerical semigroup ${\rm S}$ arises as the Weierstrass semigroup ${\rm S}(C,p)$ of a pointed curve is an old problem of Hurwitz, and remains very much open. To date, most examples of non-realizable numerical semigroups have been constructed using {\it Buchweitz' criterion} \cite{Bu}, which states that any realizable numerical semigroup ${\rm S}$ satisfies
\begin{equation}\label{buchweitz_criterion}
\# n({\rm G}) \leq (2n-1)(g-1)
\end{equation}
for all $n \geq 2$, where ${\rm G}=\mb{N} \setminus {\rm S}$ and $n({\rm G})$ denotes the set of $n$-fold sums of elements in ${\rm G}$. Buchweitz' criterion is an immediate consequence of the facts that whenever ${\rm S}={\rm S}(C,p)$,
\begin{itemize}
    \item The set of gaps ${\rm G}$ is in bijection with the set of $p$-vanishing orders of holomorphic differentials (i.e., 1-forms) on $C$;
    \item The set $n({\rm G})$ is naturally a subset of the set of $p$-vanishing orders of the space $H^0(C,K_C^{\otimes n})$ holomorphic $n$-forms on $C$; and
    \item $h^0(C,K_C^{\otimes n})= (2n-1)(g-1)$.
\end{itemize}

\begin{prop}\label{buchweitz_for_S_N}
Whenever $\ga>0$, the numerical semigroup ${\rm S}_N$ satisfies Buchweitz' criterion~\eqref{buchweitz_criterion} for every prime $N \geq 3$.
\end{prop}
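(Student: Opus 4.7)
The plan is to bound $|n({\rm G})|$ combinatorially by decomposing ${\rm G}$ into residue classes modulo $N$ and studying the resulting Minkowski sums. Remark~\ref{S_N_facts} gives the disjoint decomposition ${\rm G} = {\rm G}_{N,0} \sqcup \bigsqcup_{j=1}^{N-1} {\rm G}_{N,(ju_1)_N}$, in which ${\rm G}_{N,0} = \{N, 3N, \dots, (2\ga-1)N\}$ is an arithmetic progression of step $2N$ with $\ga$ terms, while each ${\rm G}_{N,(ju_1)_N}$ for $j \geq 1$ is an arithmetic progression of step $N$ of length $L_j := (ju_1 - (ju_1)_N)/N$. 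Since $N$ is prime with $(u_1)_N \neq 0$, the map $j \mapsto (ju_1)_N$ is a bijection on $\mathbb{Z}/N\mathbb{Z}$, and we parametrize residues of $n$-fold sums by $\rho = (u_1\mu)_N$ with $\mu \in \{0, 1, \dots, N-1\}$.

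First I would show that for each composition $\mathbf{a} = (a_0, \dots, a_{N-1})$ with $\sum_j a_j = n$, the Minkowski sum ${\rm G}(\mathbf{a}) := a_0 {\rm G}_{N,0} + \sum_{j=1}^{N-1} a_j {\rm G}_{N,(ju_1)_N}$ is an arithmetic progression of step $N$, explicitly equal to $\{\sum_{j \geq 1} a_j (ju_1)_N + NM : a_0 \leq M \leq a_0(2\ga-1) + \sum_{j \geq 1} a_j(L_j-1)\}$. This relies on the $g \gg \ga$ hypothesis to ensure $L_j \geq 2$, so that consecutive values of $M$ are all achievable. Next, I would organize $n({\rm G})$ by residue: $n({\rm G}) \cap (\rho + N\mathbb{Z})$ equals the union of ${\rm G}(\mathbf{a})$ over compositions with $\sum a_j = n$ and $\sum j a_j \equiv \mu \pmod{N}$. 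Since each summand is a step-$N$ arithmetic progression, the union is contained in the interval $[\min_\rho, \max_\rho]$ bounded by the extremal $n$-gap sums in that residue class, giving $|n({\rm G}) \cap (\rho + N\mathbb{Z})| \leq (\max_\rho - \min_\rho)/N + 1$.

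The core step is a residue-constrained optimization: $\max_\rho$ takes the form $\max_{\mathbf{a}}[2\ga N a_0 - nN + u_1 \sum_{j \geq 1} j a_j]$ over admissible $\mathbf{a}$, with an analogous formula for $\min_\rho$ in terms of minimal per-residue gaps. Summing over $\mu$ and substituting $u_1 = 2(g-\ga)/(N-1)+1$, the resulting bound reduces to $(2n-1)(g-1) - c(N,n)\ga$ for an explicit positive $c(N,n)$ whenever $\ga > 0$; for example, a direct verification for $N = 3$ shows $|n({\rm G})| = (2n-1)(g-1) - \ga(2n-3)$, strictly less than the Buchweitz bound for every $n \geq 2$.

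The main obstacle will be carrying out this optimization uniformly in $\mu$ and $n$ for arbitrary prime $N \geq 3$; a secondary technical point is to verify that the union $\bigcup_{\mathbf{a}} {\rm G}(\mathbf{a})$ within each residue class is itself a single step-$N$ arithmetic progression---which for $g \gg \ga$ follows from the observation that the composition $\mathbf{a}$ concentrating its weight on a single large-$L_j$ class yields an arithmetic progression whose hull absorbs the contributions from all other $\mathbf{a}$ in the same residue class.
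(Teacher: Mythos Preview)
Your residue-class decomposition is exactly the paper's starting point, but the paper bypasses what you flag as the ``main obstacle.'' Rather than optimizing over all compositions $\mathbf{a}$, the paper observes directly that in each residue class the set $n({\rm G})$ coincides with the \emph{completion} of the single Minkowski sum $(n-1){\rm G}_{N,((N-1)u_1)_N}+{\rm G}_{N,(ju_1)_N}$ (where the completion of $a+[b,c]$ is $a+[0,c]$). The reason is that the class indexed by $((N-1)u_1)_N$ carries the largest gap $(N-1)u_1-N$, so loading $n-1$ of the $n$ summands there already realizes the residue-class maximum, and the arithmetic-progression structure forces everything below to fill in without holes. Summing the resulting interval lengths over $j=0,\dots,N-1$ yields the closed formula
\[
\#n({\rm G}_N)=(2n-1)g-(2n-3)\ga-(n-1)((N-1)u_1)_N-1,
\]
and since $((N-1)u_1)_N=N-(u_1)_N\geq 2$ under the standing hypothesis $(u_1)_N\neq N-1$, Buchweitz' inequality follows at once for every $\ga>0$ and every $n\geq 2$.

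Your approach would also get there, but the optimization over compositions is superfluous once one identifies the dominating composition $a_{N-1}=n-1$, $a_j=1$; the paper's ``completion'' device is precisely the packaging of that observation. Two minor points: you invoke $g\gg\ga$ to guarantee $L_j\geq 2$ and to argue that the per-residue union is a single progression, but the proposition carries no such hypothesis and the paper's argument does not use it; and your $N=3$ formula $(2n-1)(g-1)-(2n-3)\ga$ is slightly off, since the exact count depends on $(u_1)_3$ through the term $(n-1)(2u_1)_3$, though this does not affect the conclusion.
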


\begin{proof}
Let ${\rm G}_N:= {\rm G}({\rm S}_N)$, and as in Remark~\ref{S_N_facts} let ${\rm G}_{N,j} \sub {\rm G}_N$ denote the subset of gaps with $N$-residue equal to $j$, $j=0, \dots, N-1$. It is easy to see that $2({\rm G})= {\rm G}+{\rm G}$ is in fact the disjoint union of the {\it completions} $\overline{{\rm G}_{N,((N-1)u_1)_N}+ {\rm G}_{N,(ju_1)_N}}$ of the (saturated) sets ${\rm G}_{N,((N-1)u_1)_N}+ {\rm G}_{N,(ju_1)_N}$, $j=0, \dots, N-1$, where we define the completion by $\overline{a+ [b,c]}:= a+ [0,c]$ whenever $a,b,c \in \mb{N}$. Note that
\[
{\rm G}_{N,0}=N\{1,3,\dots,2\ga-1\}, \text{ while } {\rm G}_{N,(ju_1)_N}=(ju_1)_N+ N\{0,1,\dots, \frac{ju_1-(ju_1)_N}{N}-1\}
\]
for all $j=1,\dots,N-1$; it follows that
\[
\begin{split}
\# \overline{{\rm G}_{N,((N-1)u_1)_N}+ {\rm G}_{N,0}}&= \frac{(N-1)u_1-((N-1)u_1)_N}{N}+ 2\ga-1\text{, while}\\
\# \overline{{\rm G}_{N,((N-1)u_1)_N}+ {\rm G}_{N,(ju_1)_N}} &= \frac{(N-1+j)u_1-(((N-1)u_1)_N+(ju_1)_N)}{N}-1
\end{split}
\]
for every $j=1,\dots,N-1$. As $u_1=\frac{2(g-\ga)}{N-1}+1$, it follows that
\[
\begin{split}
\#2({\rm G}_N) &= \sum_{j=0}^{N-1} \frac{(N-1+j)(\frac{2(g-\ga)}{N-1}+1)}{N}- ((N-1)u_1)_N- \sum_{j=1}^n \frac{(ju_1)_N}{N}- N+2\ga\\
&= \sum_{j=0}^{N-1} \frac{(N-1+j)\cdot \frac{2(g-\ga)}{N-1}}{N}- ((N-1)u_1)_N+ 2\ga-1 \\
&= 3g-\ga - ((N-1)u_1)_N-1
\end{split}
\]
and \eqref{buchweitz_criterion} follows when $n=2$. 

\medskip
More generally, whenever $n \geq 2$, $n({\rm G})$ is the disjoint union of the completions $\overline{(n-1){\rm G}_{N,N-1}+ {\rm G}_{N,j}}$, $j=0,\dots,N-1$, and we have 
\[
\begin{split}
\# \overline{{\rm (n-1)G}_{N,N-1}+ {\rm G}_{N,0}} &=\frac{(n-1)(N-1)u_1-(n-1)((N-1)u_1)_N}{N}+ 2\ga-1, \text{ while }\\
\#\overline{{\rm (n-1)G}_{N,N-1}+ {\rm G}_{N,j}} &=\frac{((n-1)(N-1)+j)u_1-((n-1)((N-1)u_1)_N+(ju_1)_N)}{N}-1
\end{split}
\]
for every $j=1,\dots,N-1$. It follows that
{\small
\[
\begin{split}
\#n({\rm G}_N) &= \sum_{j=0}^{N-1} \frac{((n-1)(N-1)+j)(\frac{2(g-\ga)}{N-1}+1)}{N} - (n-1)((N-1)u_1)_N- \sum_{j=1}^n \frac{(ju_1)_N}{N}- N+2\ga\\
&= (2n-1)g- (2n-3)\ga- (n-1)((N-1)u_1)_N-1
\end{split}
\]
}
and \eqref{buchweitz_criterion} follows for {\it every} $n \geq 2$.
\end{proof}

The upshot of Proposition~\ref{buchweitz_for_S_N} is that the realizability of ${\rm S}_N$ is somewhat subtle, inasmuch as it requires determining whether the {\it additive} structure of ${\rm G}_{N}$ and of its subsets ${\rm G}_{N,j}$ is compatible with the multiplicative structure of the pluricanonical section ring $\oplus_{n=1}^{\infty} H^0(C,K_C^{\otimes n})$ of an $(N,\ga)$-hyperelliptic pointed curve $(C,p)$. This should be contrasted with Torres' construction \cite{To3} of non-realizable numerical semigroups built out of ``fake" $(N,\ga)$-hyperelliptic covers by leveraging Buchweitz' criterion on the {\it base} of the cover, where it fails; in our case, Buchweitz' criterion is met on both base and target.

\medskip
On the other hand, under appropriate numerological assumptions, the Carvalho--Torres semigroup ${\rm S}_N^{\ast}$ {\it is} realizable by (the Weierstrass semigroup of a totally ramified point of) a cover $\pi: (C,p) \rightarrow (B,q)$ of pointed curves. Indeed, using the results of \cite[Sec. 3]{Bul}, it is not hard to show that ${\rm S}_N^{\ast}$ is the Weierstrass semigroup of a ramification point $p$ of a {\it cyclic} cover of a hyperelliptic curve $B$ marked in a hyperelliptic Weierstrass point $q$, and branched along a (reduced) divisor $D$ linearly equivalent to $dq$, where $d= \frac{2g-2- N(2\ga-2)}{N-1}$. In particular, the maximal weight of a {\it geometrically realizable} $(N,\ga)$-hyperelliptic semigroup of genus $g$, subject to the numerical constraints of Conjecture~\ref{maximal_weight_conjecture}, is {\it at least} $w_N^{\ast}$. 



\begin{thebibliography}{10}
\bibitem{BdM} M. Bras-Amor\'os and A. de Mier, {\it Representation of numerical semigroups by Dyck paths}, Semigroup Forum {\bf 75} (2007), no. 3, 676--681.
\bibitem{BS} M. Brundu and G. Sacchiero, {\it On the varieties parametrizing trigonal curves with assigned Weierstrass points}, Comm. Alg. {\bf 26} (1998), no. 10, 3291--3312.
\bibitem{Bu} R. Buchweitz, {\it On Zariski's criterion for equisingularity and non-smoothable monomial curves}, Univ. Hannover preprint {\bf 113} (1980).
\bibitem{Bul} E. Bullock, {\it Subcanonical points on algebraic curves}, Trans. AMS {\bf 365} (2013), no. 1, 99--122.
\bibitem{Co} M. Coppens, {\it The Weierstrass gap sequences of the total ramification points of trigonal coverings of $\mb{P}^1$}, Indag. Math. {\bf 47} (1985), 245--276.
\bibitem{CT} C. Carvalho and F. Torres, {\it Numerical semigroups related to covering of curves}, Semigroup Forum {\bf 67} (2003), 344--354.
\bibitem{SV} K.O. St\"{o}hr and P. Viana,{\it Weierstrass gap sequences and moduli varieties of trigonal curves}, J. Pure Appl. Alg. {\bf 81} (1992), 63--82.
\bibitem{To} F. Torres, {\it Weierstrass points and double coverings of curves with application: symmetric numerical semigroups which cannot be realized as Weierstrass semigroups}, Manuscripta Math. {\bf 83} (1994), 39--58.
\bibitem{To3} F. Torres, {\it On $N$-sheeted covering of curves and semigroups which cannot be realized as Weierstrass semigroups}, Comm. Alg. {\bf 23} (1995), no. 11, 4211--4228.
\bibitem{To2} F. Torres, {\it On $\ga$-hyperelliptic numerical semigroups}, Semigroup Forum {\bf 55} (1997), 364--379.
\end{thebibliography}
\end{document}